\declaretheorem[style=plain,numberwithin=section]{theorem}
\declaretheorem[style=plain,sibling=theorem]{lemma}
\declaretheorem[style=definition,sibling=theorem]{definition}
\declaretheorem[style=definition,sibling=theorem]{example}
\declaretheorem[style=definition,sibling=theorem]{remark}
\declaretheorem[style=definition,numbered=no]{construction}
\newcommand{\Z}{\ensuremath{\mathbb{Z}}}
\newcommand{\one}{\ensuremath{\mathbf{1}}}
\renewcommand{\equiv}{\sim}
\DeclareMathOperator{\sgon}{sgon}
\DeclareMathOperator{\dgon}{dgon}
\DeclareMathOperator{\sdgon}{sdgon}
\DeclareMathOperator{\rank}{rank}
\DeclareMathOperator{\tw}{tw}
\tikzstyle{vertex}=[circle, draw, fill=white, inner sep=0pt, minimum width=5pt]
\tikzstyle{added}=[circle, draw, fill=white, inner sep=0pt, minimum width=3pt]
\tikzstyle{edge} = [] 
\tikzstyle{fatedge} = [ultra thick]
\newcommand{\graafr}[2]{
	\node[vertex] (a) at ({.5+#1},{0+#2}) {};
	\node[vertex] (b) at ({-.25+#1},{0.43+#2}) {};
	\node[vertex] (c) at ({-.25+#1},{-0.43+#2}) {};
	\node[vertex] (d) at ({1.5+#1},{0+#2}) {};
	\node[vertex] (e) at ({-.75+#1},{1.30+#2}) {};
	\node[vertex] (f) at ({-.75+#1},{-1.30+#2}) {};
	\draw[edge] (a) -- (b) -- (c) -- (a);
	\draw[edge] (a) -- (d);
	\draw[edge] (a) to [relative, out=50,in=130] (d);
	\draw[edge] (d) to [relative, out=50,in=130] (a);
	\draw[edge] (b) -- (e);
	\draw[edge] (b) to [relative, out=50,in=130] (e);
	\draw[edge] (e) to [relative, out=50,in=130] (b);
	\draw[edge] (c) -- (f);
	\draw[edge] (c) to [relative, out=50,in=130] (f);
	\draw[edge] (f) to [relative, out=50,in=130] (c);
}
\newcommand{\makeg}[4]{

    \node[vertex, color = white] (center) at ({#1},{#2}) {};
	\node[vertex] (a) at ($(center) + ({#3}:{#4})$) {};
	    \node[vertex] (a1) at ($(center)+({#3}:{#4})+(30:{#4})$) {};
	    \node[vertex] (a2) at ($(center)+({#3}:{#4})+(-30:{#4})$) {};
	\node[vertex] (b) at ($(center)+({#3+120}:{#4})$) {};
	    \node[vertex] (b1) at ($(center)+({#3+120}:{#4}) + (90:{#4})$) {};
	    \node[vertex] (b2) at ($(center)+({#3+120}:{#4}) + (150:{#4})$) {};
	\node[vertex] (c) at ($(center)+({#3+240}:{#4})$) {};
	    \node[vertex] (c1) at ($(center)+({#3+240}:{#4}) + (-90:{#4})$) {};
	    \node[vertex] (c2) at ($(center)+({#3+240}:{#4}) + (-150:{#4})$) {};
	
	\draw[edge] (a) -- (a1) -- (a2) -- (a);
	\draw[edge] (b) -- (b1) -- (b2) -- (b);
	\draw[edge] (c) -- (c1) -- (c2) -- (c);
	\draw[edge] (a) -- (b) -- (c) -- (a);
}
\newcommand{\makesmallgul}[4]{
    \node[added, color = white] (center) at ({#1},{#2}) {};
	\node[added] (a) at ($(center) + ({#3}:{#4})$) {};
	    \node[added] (a1) at ($(center)+({#3}:{#4})+(30:{#4})$) {};
	    \node[added] (a2) at ($(center)+({#3}:{#4})+(-30:{#4})$) {};
	\node[added] (b) at ($(center)+({#3+120}:{#4})$) {};
	    \node[added] (b1) at ($(center)+({#3+120}:{#4}) + (90:{#4})$) {};
	    \node[added] (b2) at ($(center)+({#3+120}:{#4}) + (150:{#4})$) {};
	\node[added] (c) at ($(center)+({#3+240}:{#4})$) {};
	    \node[added] (c2) at ($(center)+({#3+240}:{#4}) + (-150:{#4})$) {};
	
	\draw[edge] (a) -- (a1) -- (a2) -- (a);
	\draw[edge] (b) -- (b1) -- (b2) -- (b);
	\draw[edge] (c) -- (s2) -- (c2) -- (c);
	\draw[edge] (a) -- (b) -- (c) -- (a);
}
\newcommand{\makesmallgrul}[4]{
    \node[added, color = white] (center) at ({#1},{#2}) {};
	\node[added] (a) at ($(center) + ({#3}:{#4})$) {};
	    \node[added] (a1) at ($(center)+({#3}:{#4})+(30:{#4})$) {};
	    \node[added] (a2) at ($(center)+({#3}:{#4})+(-30:{#4})$) {};
	\node[added] (b) at ($(center)+({#3+120}:{#4})$) {};
	    \node[added] (b1) at ($(center)+({#3+120}:{#4}) + (90:{#4})$) {};
	    \node[added] (b2) at ($(center)+({#3+120}:{#4}) + (150:{#4})$) {};
	\node[added] (c) at ($(center)+({#3+240}:{#4})$) {};
	    \node[added] (c2) at ($(center)+({#3+240}:{#4}) + (-150:{#4})$) {};
	
	\draw[edge] (a) -- (a1) -- (a2) -- (a);
	\draw[edge] (b) -- (b1) -- (b2) -- (b);
	\draw[edge] (c) -- (s2) -- (c2) -- (c);
	\draw[edge] (a) -- (b) -- (c) -- (a);
}
\newcommand{\makesmallgdl}[4]{
    \node[added, color = white] (center) at ({#1},{#2}) {};
	\node[added] (a) at ($(center) + ({#3}:{#4})$) {};
	    \node[added] (a2) at ($(center)+({#3}:{#4})+(-30:{#4})$) {};
	\node[added] (b) at ($(center)+({#3+120}:{#4})$) {};
	    \node[added] (b1) at ($(center)+({#3+120}:{#4}) + (90:{#4})$) {};
	    \node[added] (b2) at ($(center)+({#3+120}:{#4}) + (150:{#4})$) {};
	\node[added] (c) at ($(center)+({#3+240}:{#4})$) {};
	    \node[added] (c1) at ($(center)+({#3+240}:{#4}) + (-90:{#4})$) {};
	    \node[added] (c2) at ($(center)+({#3+240}:{#4}) + (-150:{#4})$) {};
	
	\draw[edge] (a) -- (s3) -- (a2) -- (a);
	\draw[edge] (b) -- (b1) -- (b2) -- (b);
	\draw[edge] (c) -- (c1) -- (c2) -- (c);
	\draw[edge] (a) -- (b) -- (c) -- (a);
}
\newcommand{\makesmallguldl}[4]{
    \node[added, color = white] (center) at ({#1},{#2}) {};
	\node[added] (a) at ($(center) + ({#3}:{#4})$) {};
	    \node[added] (a2) at ($(center)+({#3}:{#4})+(-30:{#4})$) {};
	\node[added] (b) at ($(center)+({#3+120}:{#4})$) {};
	    \node[added] (b1) at ($(center)+({#3+120}:{#4}) + (90:{#4})$) {};
	    \node[added] (b2) at ($(center)+({#3+120}:{#4}) + (150:{#4})$) {};
	\node[added] (c) at ($(center)+({#3+240}:{#4})$) {};
	    \node[added] (c1) at ($(center)+({#3+240}:{#4}) + (-90:{#4})$) {};
	    \node[added] (c2) at ($(center)+({#3+240}:{#4}) + (-150:{#4})$) {};
	
	\draw[edge] (a) -- (s3) -- (a2) -- (a);
	\draw[edge] (b) -- (b1) -- (b2) -- (b);
	\draw[edge] (c) -- (c1) -- (c2) -- (c);
	\draw[edge] (a) -- (b) -- (c) -- (a);
}
\newcommand{\makesmallgr}[4]{
    \node[added, color = white] (center) at ({#1},{#2}) {};
	\node[added] (a) at ($(center) + ({#3}:{#4})$) {};
	    \node[added] (a1) at ($(center)+({#3}:{#4})+(30:{#4})$) {};
	    \node[added] (a2) at ($(center)+({#3}:{#4})+(-30:{#4})$) {};
	\node[added] (b) at ($(center)+({#3+120}:{#4})$) {};
	    \node[added] (b2) at ($(center)+({#3+120}:{#4}) + (90:{#4})$) {};
	\node[added] (c) at ($(center)+({#3+240}:{#4})$) {};
	    \node[added] (c1) at ($(center)+({#3+240}:{#4}) + (-90:{#4})$) {};
	    \node[added] (c2) at ($(center)+({#3+240}:{#4}) + (-150:{#4})$) {};
	
	\draw[edge] (a) -- (a1) -- (a2) -- (a);
	\draw[edge] (b) -- (s1) -- (b2) -- (b);
	\draw[edge] (c) -- (c1) -- (c2) -- (c);
	\draw[edge] (a) -- (b) -- (c) -- (a);
}
\newcommand{\makesmallgdlr}[4]{
    \node[added, color = white] (center) at ({#1},{#2}) {};
	\node[added] (a) at ($(center) + ({#3}:{#4})$) {};
	    \node[added] (a1) at ($(center)+({#3}:{#4})+(30:{#4})$) {};
	    \node[added] (a2) at ($(center)+({#3}:{#4})+(-30:{#4})$) {};
	\node[added] (b) at ($(center)+({#3+120}:{#4})$) {};
	    \node[added] (b2) at ($(center)+({#3+120}:{#4}) + (90:{#4})$) {};
	\node[added] (c) at ($(center)+({#3+240}:{#4})$) {};
	    \node[added] (c1) at ($(center)+({#3+240}:{#4}) + (-90:{#4})$) {};
	    \node[added] (c2) at ($(center)+({#3+240}:{#4}) + (-150:{#4})$) {};
	
	\draw[edge] (a) -- (a1) -- (a2) -- (a);
	\draw[edge] (b) -- (s1) -- (b2) -- (b);
	\draw[edge] (c) -- (c1) -- (c2) -- (c);
	\draw[edge] (a) -- (b) -- (c) -- (a);
}
\newcommand{\makegtwo}[4]{
    \node[vertex, color = white] (center) at ({#1},{#2}) {};
	\node[vertex] (s1) at ($(center) + ({#3}:{#4})$) {};
	\node[vertex] (s2) at ($(center)+({#3+120}:{#4})$) {};
	\node[vertex] (s3) at ($(center)+({#3+240}:{#4})$) {};
	\draw[edge] (s1) -- (s2) -- (s3) -- (s1);
	
	\makesmallgul{#1 - 0.5 - 0.433}{#2 + 0.5 + 1.118}{90}{0.5}
	\makesmallgdl{#1 - 0.5 - 0.433}{#2 - 0.5 - 1.118}{90}{0.5}
	\makesmallgr{#1 + 1.867}{#2}{90}{0.5}
}
\newcommand{\makegtwor}[4]{
    \node[vertex, color = white] (center) at ({#1},{#2}) {};
	\node[vertex] (s1) at ($(center) + ({#3}:{#4})$) {};
	\node[vertex] (s2) at ($(center)+({#3+120}:{#4})$) {};
	\node[vertex] (s3) at ($(center)+({#3+240}:{#4})$) {};
	\draw[edge] (s1) -- (s2) -- (s3) -- (s1);
	
	\makesmallgrul{#1 - 0.5 - 0.433}{#2 + 0.5 + 1.118}{90}{0.5}
	\makesmallgdl{#1 - 0.5 - 0.433}{#2 - 0.5 - 1.118}{90}{0.5}
	\makesmallgr{#1 + 1.867}{#2}{90}{0.5}
}
\newcommand{\makegtwoul}[4]{
    \node[vertex, color = white] (center) at ({#1},{#2}) {};
	\node[vertex] (s1) at ($(center) + ({#3}:{#4})$) {};
	\node[vertex] (s2) at ($(center)+({#3+120}:{#4})$) {};
	\node[vertex] (s3) at ($(center)+({#3+240}:{#4})$) {};
	\draw[edge] (s1) -- (s2) -- (s3) -- (s1);
	
	\makesmallgul{#1 - 0.5 - 0.433}{#2 + 0.5 + 1.118}{90}{0.5}
	\makesmallguldl{#1 - 0.5 - 0.433}{#2 - 0.5 - 1.118}{90}{0.5}
	\makesmallgr{#1 + 1.867}{#2}{90}{0.5}
}
\newcommand{\makegtwodl}[4]{
    \node[vertex, color = white] (center) at ({#1},{#2}) {};
	\node[vertex] (s1) at ($(center) + ({#3}:{#4})$) {};
	\node[vertex] (s2) at ($(center)+({#3+120}:{#4})$) {};
	\node[vertex] (s3) at ($(center)+({#3+240}:{#4})$) {};
	\draw[edge] (s1) -- (s2) -- (s3) -- (s1);
	
	\makesmallgul{#1 - 0.5 - 0.433}{#2 + 0.5 + 1.118}{90}{0.5}
	\makesmallgdl{#1 - 0.5 - 0.433}{#2 - 0.5 - 1.118}{90}{0.5}
	\makesmallgdlr{#1 + 1.867}{#2}{90}{0.5}
}
\newcommand{\makegthree}[3]{
	\makegtwoul{#1 - 0.5}{#2 + 4.1}{0}{1}
	\makegtwodl{#1 - 3.3}{#2 - 2.48}{0}{1}
	\makegtwor{#1 + 3.805}{#2 - 1.618}{0}{1}
	
	\node[vertex, color = white] (center) at ({#1},{#2}) {};
    \node[vertex] (t1) at ($(center) + ({#3}:2)$) {};
	\node[vertex] (t2) at ($(center)+({#3+120}:2)$) {};
	\node[vertex] (t3) at ($(center)+({#3+240}:2)$) {};
	\draw[edge] (t1) -- (t2) -- (t3) -- (t1);
}
\newcommand{\central}[4]{
    \node[vertex, color = white] (center) at ({#1},{#2}) {};
	\node[vertex, label=$a_1$] (s1) at ($(center) + ({#3}:{#4})$) {};
	\node[vertex, label=right:$a_2$] (s2) at ($(center)+({#3+120}:{#4})$) {};
	\node[vertex, label=right:$a_3$] (s3) at ($(center)+({#3+240}:{#4})$) {};
	\draw[edge] (s1) -- (s2) -- (s3) -- (s1);
	
	\node[added] (x1) at ($(center) + ({#3 - 60}:{0.5*#4})$) {};
	\node[added] (x2) at ($(x1) + ({#3 - 60}:{0.5*#4})$) {};
	\node[added] (x3) at ($(x2) + ({#3 - 30}:{0.5*#4})$) {};
	\node[added] (x4) at ($(x2) + ({#3 - 90}:{0.5*#4})$) {};
	\draw[edge] (x1) -- (x2) -- (x3);
	\draw[edge] (x2) -- (x4);
	
	\node[added] (x5) at ($(center) + ({#3 + 60}:{0.5*#4})$) {};

}
\newcommand{\makegtwolarge}[4]{
    \central{#1}{#2}{#3}{#4}
	\node[added] (x6) at ($(s3) + ({#3 + 120}:{0.5*#4})$) {};
	\node[added] (x7) at ($(x6) + ({#3 + 180}:{0.5*#4})$) {};
	\draw[edge] (s3) -- (x6) -- (x7);
	
	\makeg{#1 - 0.5 - 0.651}{#2 + 0.5 + 1.491}{90}{0.75}
	
	\node[added] (x8) at ($(a1) + ({#3 + 120}:{0.5*#4})$) {};
	\node[added] (x9) at ($(x8) + ({#3 + 180}:{0.5*#4})$) {};
	\node[added] (x10) at ($(x9) + ({#3 + 60}:{0.5*#4})$) {};
	\draw[edge] (a1) -- (x8) -- (x9);
	\draw[edge] (x8) -- (x10);

	\makeg{#1 - 0.5 - 0.651}{#2 - 0.5 - 1.491}{90}{0.75}
	\makeg{#1 + 2.3}{#2}{90}{0.75}
	
	\node[added] (y1) at ($(b) + ({#3 + 60}:{0.66*#4})$) {};
	\node[added] (y2) at ($(y1) + ({#3 + 90}:{0.9*#4})$) {};
	\node[added] (y3) at ($(y2) + ({#3 + 120}:{0.66*#4})$) {};
	\node[added] (y4) at ($(y3) + ({#3 + 60}:{0.66*#4})$) {};
	\draw[edge] (y1) -- (y2) -- (y3) -- (y4);
}
\title{Computing graph gonality is hard}
\author[1]{Dion Gijswijt}
\author[2]{Harry Smit}
\author[2,3]{Marieke van der Wegen}
\affil[1]{Delft University of Technology}
\affil[2]{Mathematical Institute, Utrecht University, PO Box 80.010, 3508 TA Utrecht, The Netherlands}
\affil[3]{Department of Information and Computing Sciences, Utrecht University, Princetonplein 5, 3584 CC Utrecht,  The Netherlands}
\begin{document}
\maketitle

\begin{abstract}
There are several notions of gonality for graphs. 
The divisorial gonality $\dgon(G)$ of a graph $G$ is the smallest degree of a divisor of positive rank in the sense of Baker-Norine. The stable gonality $\sgon(G)$ of a graph $G$ is the minimum degree of a finite harmonic morphism from a refinement of $G$ to a tree, as defined by Cornelissen, Kato and Kool. We show that computing $\dgon(G)$ and $\sgon(G)$ are NP-hard by a reduction from the maximum independent set problem and the vertex cover problem, respectively. Both constructions show that computing gonality is moreover APX-hard.
\end{abstract}

\section{Introduction}
In complex geometry, one attaches to a compact Riemann surface (equivalently, a complex algebraic curve) an invariant called gonality. This invariant measures `how far' a given Riemann surface $X$ is from the Riemann sphere $\widehat{\mathbf{C}}$; gonality is the minimal degree of a holomorphic map $X \rightarrow \widehat{\mathbf{C}}$. Alternatively, gonality can be defined as the minimal degree of a divisor of rank one. As we will now explain, both of these definitions can be transferred from Riemann surfaces to graphs, where, maybe somewhat surprisingly, the corresponding notions are no longer equivalent.

\paragraph{Divisorial gonality}
The first notion of gonality of graphs was introduced by Baker and Norine \cite{BN2007}, who developed a theory of divisors on finite graphs in which they uncovered many parallels between finite graphs and Riemann surfaces. In particular, they stated and proved a graph theoretical analogue of the classical Riemann-Roch theorem. See \cite{Baker2008, CornelissenKatoKool, HKN} for background on the interplay between divisors on graphs, curves and tropical curves. 

As observed in \cite{BN2007}, there is also a close connection between divisor theory and the chip-firing game of Bj\"orner, Lov\'asz and Shor \cite{BLS1991}. A divisor can be thought of as a distribution of chips over the vertices of a graph, where every vertex is assigned an integer number of chips. 
One divisor is transformed into another by firing sets of vertices: when a set $U$ is fired, a chip is moved along each edge from $U$ to $V\setminus U$. Bj\"orner, Lov\'asz and Shor considered the game where vertices are assigned non-negative numbers of chips and studied whether there is an infinite sequence of singleton sets that can be fired without any vertex getting a negative number of chips. 
See \cite{Merino} for the connections to the Abelian sandpile model from statistical physics and Biggs' dollar game \cite{Biggs}.  

The divisorial gonality $\dgon(G)$ of a connected graph $G$ is an important parameter associated to $G$ in the context of divisor theory. 
It is defined as the smallest degree of a positive rank divisor. In terms of the chip-firing game, the degree is the number of chips in the game. Positive rank means the following: for every vertex $v$ the divisor can be transformed into a divisor which assigns at least one chip to $v$ and a non-negative number to all other vertices.  
From \cite[Corollary 5.4]{BN2007} it follows that the divisorial gonality of a connected graph $G$ (with at least 3 vertices) is related to the finiteness of a game in the sense of \cite{BLS1991}. Specifically, $\dgon(G)=2|E|-|V|-t(G)$, where $t(G)$ is the maximum number of chips that can be placed on the graph $G=(V,E)$ such that adding a chip at an arbitrary vertex still results in a finite game. 

\paragraph{Stable gonality}
The second definition of gonality of Riemann surfaces was translated to graphs by Cornelissen, Kato and Kool \cite{CornelissenKatoKool} and is called stable gonality. Recall that gonality is defined using morphisms to the Riemann sphere, which is the unique compact Riemann surface with first Betti number zero. As the graphs with first Betti number (also known as circuit rank) zero are exactly the trees, stable gonality is defined using morphisms to trees. The morphisms considered are finite harmonic morphisms. Intuitively, these are morphisms that divide the edges of the graph equally over the edges of the tree. This notion is called stable because we are allowed to \emph{refine} the graph first: a refinement of a graph is obtained by adding degree one vertices and subdividing edges. The stable gonality $\sgon(G)$ of a connected graph $G$ is the minimum degree of such a finite harmonic morphism from a refinement of $G$ to a tree. See \cite{Amini2, Caporaso} for similar notions of gonality on tropical curves and graphs. 

The gonality of a Riemann surface defined over a number field (considered as a compact Riemann surface over $\mathbb{C}$) is bounded from below by the stable gonality of the intersection dual graph of its reduction modulo any prime ideal of the number field. This makes stable gonality of graphs relevant for number theoretic problems (e.g.\ \cite{CornelissenKatoKool}). 

\paragraph{Computational complexity}
As far as we know, no efficient algorithm exists to compute the gonality of an arbitrary Riemann surface. 
In \cite{SchichoSchreyerWeimann}, a good algorithm is given to compute the gonality of Riemann surfaces of small genus, but for arbitrary curves of genus larger than 7 the computations become too involved for the algorithm to terminate in reasonable time. In this paper we show that both notions of gonality of graphs are NP-hard to compute, so we cannot expect efficient algorithms to compute them either (unless P = NP). An easier problem is to decide whether a given curve or graph is hyperelliptic, i.e.\ has gonality 2: there is an algorithm in Magma for this problem on curves \cite{magma}, but a rigorous analysis of its complexity has not been carried out. However, there are algorithms that decide this for both notions of gonality for graphs in quasilinear time \cite{BBCW}. 

The computational complexity of the notions of gonality for graphs is also interesting from an algorithmic point of view, where one can ask whether these new graph parameters can be used for fixed parameter tractable algorithms. It is known that treewidth is a lower bound for both notions of graph gonality \cite{GonTW}, which raises the question whether or not NP-hard problems exist that are not tractable on graphs of bounded treewidth, but are tractable on graphs of bounded gonality. 

It is known that computing the divisorial gonality of a graph is in the complexity class XP: for every divisor with $k$ chips, we can check whether it has positive rank in polynomial time \cite{BakerShokrieh}. There exists an algorithm to compute the stable gonality of a graph in $O((1.33n)^nm^m\text{poly}(n,m))$ time, where $n$ is the number of vertices and $m$ the number of edges of the graph \cite{GKW}.

\paragraph{Bounds on gonality}
An upper bound for stable gonality has been established in terms of the first Betti number $b_1 = |E| - |V| + 1$: for any connected graph $G$ one has
\[
\sgon(G) \leq \frac{b_1 + 3}{2},
\]
matching the classical Brill-Noether bound \cite{CornelissenKatoKool}. Although this is a graph theoretic statement, no combinatorial proof is known for this upper bound. For divisorial gonality the same upper bound is conjectured, see \cite{Baker2008}. This is better than the trivial upper bound $\dgon(G) \leq |V|$. 

\begin{figure}
    \centering
    \begin{tikzpicture}
    \node (tw) at (0,0) {$\tw$};
    \node (sd) at (0,-1) {$\sdgon$};
    \node[text depth = .1cm, text
    height = .2cm] (d) at (1,-2) {$\dgon$};
    \node[text depth = .1 cm, text
    height = .2cm] (s) at (-1,-2) {$\sgon$};
    \node (b) at (0,-3) {$\frac{b_1+3}{2}$};
    \draw[->] (tw) -- (sd);
    \draw[->] (sd) to [bend left] (d);
    \draw[->, dashed] (d) to [bend left] (b);
    \draw[->] (sd) to [bend right] (s);
    \draw[->, dotted] (s) to [bend right] (b);
    
    \draw[->] (3.3,0) -- (4,0);
    \draw[->, dashed] (3.3,-.5) -- (4,-.5);
    \draw[->, dotted] (3.3,-1) -- (4,-1);
    
    \node[anchor=west, text depth = .1cm, text
    height = .2cm] (d) at (4,0) {\small bounded above by};
    \node[anchor=west, text depth = .1cm, text
    height = .2cm] (s) at (4,-.5) {\small conjecturally bounded above by};
    \node[anchor=west, text depth = .1cm, text
    height = .2cm] (b) at (4,-1) {\small bounded above by,};
    \node[anchor=west, text depth = .1cm, text
    height = .2cm] (b) at (4,-1.5) {\small no combinatorial proof known};
    \draw (3,.3) -- (9,.3) -- (9,-1.8) -- (3,-1.8) -- (3,.3);
    \end{tikzpicture}
    \caption{An overview of the relations between different graph invariants.}
    \label{fig:relation2}
\end{figure}
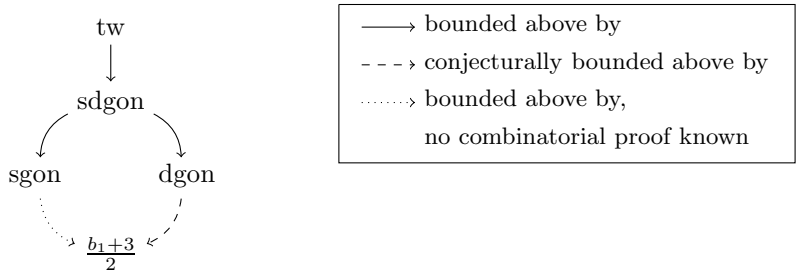
Bounds between the different invariants (including a hybrid version called “stable divisorial gonality sdgon(G)”, defined in 2.4 below) are represented in Figure 1, with the convention that when \emph{no} arrow is drawn from one parameter to another, the one is unbounded in terms of the other. 
In Section \ref{sec:untied} we elaborate on this and prove that divisorial and stable gonality are unbounded in one another. 

In \cite{AminiKool2014}, a lower bound $\dgon(G)\geq\frac{|V|\lambda_1}{24\Delta}$ is given in terms of the smallest nonzero eigenvalue $\lambda_1$ of the Laplacian $Q(G)$ and the maximum degree $\Delta$ of $G$. For stable gonality we have $\sgon(G)\geq\frac{|V|\lambda_1}{\lambda_1 + 4(\Delta + 1)}$ \cite[Theorem 5.10]{CornelissenKatoKool}.

\section{Definitions and notation}
\label{sec:prelims}

\subsection{Graphs and divisors}
Throughout the paper we consider only connected graphs $G=(V,E)$. We allow graphs to have parallel edges and loops. We write $u-v$-path for a path from $u$ to $v$. By \emph{path} we always mean simple path; we call a non-simple path a \emph{walk}.

For $A,B\subseteq V$, we denote by $E(A,B)$ the set of edges with an end in $A$ and an end in $B$ and by $E[A]:=E(A,A)$ the set of edges with both ends in $A$. For vertices $u,v\in V$, we use the abbreviation $E(u,v):=E(\{u\},\{v\})$ for the set of edges between $u$ and $v$ and we write $E(u):=E(\{u\}, V\setminus \{u\})$ for all edges incident to $u$. By $\deg(u)$ we denote the degree of a vertex $u$, where loops are counted twice. The \emph{Laplacian} of $G$ is the matrix $Q(G)\in \mathbb{Z}^{V\times V}$ defined by  
\begin{equation}
Q(G)_{uv}:=\begin{cases}\deg(u) - 2 |E(u,u)| &\text{if $u=v$},\\-|E(u,v)|&\text{otherwise}.\end{cases}
\end{equation}

A vector $D\in \Z^V$ is called a \emph{divisor} on $G$ and $\deg(D):=\sum_{v\in V}D(v)$ is its \emph{degree}. A divisor $D$ is \emph{effective} if $D\geq 0$, i.e.\ $D(v) \geq 0$ for all $v\in V$. Two divisors $D$ and $D'$ are \emph{equivalent}, written $D\sim D'$, if there is an integer vector $x\in \Z^V$ such that $D-D'=Q(G)x$. This is indeed an equivalence relation and equivalent divisors have equal degrees as the entries in every column of $Q(G)$ sum to zero.

Let $D$ be a divisor. If $D$ is equivalent to an effective divisor, then we define
\begin{align*}
\rank(D):=\max\{k\mid & \text{ $D-E$ is equivalent to an effective divisor} \\
	&\text{ for every effective $E$ of degree $\leq k$}\}.
\end{align*}
If $D$ is not equivalent to an effective divisor, we set $\rank(D):=-1$. Observe that equivalent divisors have the same rank. Answering a question of H.W. Lenstra, it was shown in \cite{RankHard} that computing the rank of a divisor is NP-hard. 

Finally, we define the \emph{divisorial gonality} of $G$ to be 
\begin{equation}
\dgon(G):=\min\{\deg(D)\mid \rank(D)\geq 1\}.
\end{equation}

Observe that in the definition of divisorial gonality, we can restrict ourselves to \emph{effective} divisors $D$. Hence $\dgon(G)$ is the minimum degree of an effective divisor $D$ such that for every vertex $v$ there is an effective divisor $D'\sim D$ with $D'(v)\geq 1$. Also note that $\dgon(G)\leq |V|$ since taking $D(v)=1$ for all $v\in V$ gives a divisor of positive rank.

To facilitate reasoning about equivalence of effective divisors, we denote by $\one_U$ the incidence vector of a subset $U$ of $V$. When $D$ and $D'$ are effective divisors and $D'=D-Q(G)\one_U$, we say that $D'$ is obtained from $D$ by firing the set $U$. If we think of $D(v)$ as the number of chips on a vertex $v$, then firing $U$ corresponds to moving one chip along each edge of the cut $E(U,V\setminus U)$ in the direction from $U$ to $V\setminus U$. In particular, we must have that $D(v)\geq |E(\{v\},V\setminus U)|$ for every $v\in U$ as $D'\geq 0$. Hence, we cannot fire any set $U$ for which the cut $E(U,V\setminus U)$ has more than $\deg(D)$ edges.

The following lemma from \cite{GonTW} shows that for equivalent effective divisors $D$ and $D'$, we can obtain $D'$ from $D$ by successively firing sets.

\begin{lemma}\label{chain}
Let $D$ and $D'$ be equivalent effective divisors satisfying $D \neq D'$. Then there is a chain of sets $\emptyset\subsetneq U_1\subseteq U_2\subseteq \cdots\subseteq U_k\subsetneq V$ such that $D_t:=D-Q(G) \sum_{i=1}^t \one_{U_i}$ is effective for every $t=1,\ldots,k$ and such that $D_k=D'$.
\end{lemma}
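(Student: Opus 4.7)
The plan is to write the equivalence as $D - D' = Q(G)x$ for some $x \in \Z^V$ and to build the chain directly from the level sets of $x$, firing the top level first.

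Since $G$ is connected, $\ker Q(G) = \Z\one_V$, so by subtracting a suitable integer multiple of $\one_V$ from $x$ we may assume $x \geq 0$ and $\min_v x(v) = 0$. As $D \neq D'$, the vector $x$ is nonzero, so $k := \max_v x(v) \geq 1$. I would then define
\[
U_i := \{v \in V : x(v) \geq k - i + 1\} \quad (i = 1, \ldots, k),
\]
which by construction form a chain $\emptyset \subsetneq U_1 \subseteq \cdots \subseteq U_k \subsetneq V$ (the strict containments at the endpoints come from $\max x \geq 1$ and $\min x = 0$) and satisfy $x = \sum_{i=1}^k \one_{U_i}$, so $D_k = D - Q(G)x = D'$ holds automatically.

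The remaining task --- and the main content of the proof --- is to show that every partial sum $D_t$ is effective. I would argue by induction on $t$, reducing the inductive step to the following local claim: whenever $\tilde D$ and $D'$ are equivalent effective divisors with $\tilde D - D' = Q(G)y$, $y \geq 0$, $y \neq 0$, and $\min y = 0$, firing on the top level set $U := \{v : y(v) = \max y\}$ yields an effective divisor. Applied to $\tilde D = D_{t-1}$ and $y_t := x - \sum_{i < t} \one_{U_i}$, a short level-set computation shows $y_t = \min(x, k - t + 1)$ pointwise, so its top level set is precisely $U_t$, which closes the induction.

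The heart of the argument is this local claim. For $v \in U$ with $m := \max y$, using $\deg(v) = \sum_{u \neq v} |E(v,u)|$ (no loops), the Laplacian expression collapses to
\[
\tilde D(v) - D'(v) = \sum_{u \neq v} |E(v,u)|\bigl(m - y(u)\bigr).
\]
Terms with $u \in U$ vanish and terms with $u \notin U$ contribute at least $|E(v,u)|$, so $\tilde D(v) \geq D'(v) + |E(v, V \setminus U)| \geq |E(v, V \setminus U)|$, which is exactly the condition needed for firing $U$ to keep $v$ nonnegative; at vertices outside $U$ firing only adds chips, so effectiveness is preserved globally. The main conceptual point, which I expect to be the subtle part, is recognising that the correct firing order is top-down in the level sets of $x$: this is what causes the resulting chain to be listed in increasing order, as demanded by the statement.
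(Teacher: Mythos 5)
Your proof is correct and complete: the normalization of $x$ using $\ker Q(G)=\Z\one_V$, the identity $\sum_{i=1}^k\one_{U_i}=x$ for the level sets, and the computation showing that firing the top level set of $y$ preserves effectiveness are all sound. The paper itself gives no proof of this lemma (it is quoted from \cite{GonTW}), and your level-set argument is essentially the standard proof given there, so there is nothing to flag.
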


A divisor $D$ is called $v$-reduced, if $D(u) \geq 0$ for all $u\in V\setminus\{v\}$ and for every set $U\subseteq V\setminus\{v\}$ there is a vertex $u\in V\setminus\{v\}$ such that $D'(u) <0$, where $D'=D-Q(G)\one_U$. That is, $D$ is $v$-reduced if $D$ is effective outside $v$ and no subset of $V\setminus \{v\}$ can be fired. 

\begin{lemma}[{\cite[Lemma 1.4]{GonTW}}]\label{lem:unique-v-reduced}
Let $D$ be a divisor and $v\in V$ a vertex. There is a unique $v$-reduced divisor equivalent to $D$. 
\end{lemma}

Notice that a divisor $D$ has rank at least 1 if and only if for every $v$-reduced divisor it holds that $D(v) \geq 1$. 

We end this section with the notion of \emph{refinements} and \emph{stable divisorial gonality}.

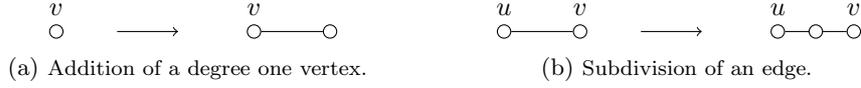
\begin{figure}
	\centering
	\begin{subfigure}{.4\textwidth}
		\centering
		\begin{tikzpicture}
		\node[vertex, label=$v$] (v) at (0,0) {};
		\draw[->] (.8,0) -- (1.6,0);
		\node[vertex, label=$v$] (v) at (2.6,0) {};
		\node[vertex] (u) at (3.6,0) {};
		\draw[edge] (u) -- (v);
		\end{tikzpicture}
		\caption{\footnotesize Addition of a degree one vertex.}
		\label{fig:leaf}
	\end{subfigure}	
	\quad\begin{subfigure}{.4\textwidth}
		\centering
		\begin{tikzpicture}
		\node[vertex, label=$u$] (u) at (0,0) {};
		\node[vertex, label=$v$] (v) at (1,0) {};
		\draw[edge] (u) -- (v);
		\draw[->] (1.8,0) -- (2.6,0);
		\node[vertex, label=$u$] (v) at (3.6,0) {};
		\node[vertex, label=$v$] (u) at (4.6,0) {};
		\draw[edge] (u) -- (v);
		\node[vertex] (w) at (4.1,0) {};
		\end{tikzpicture}
		\caption{\footnotesize Subdivision of an edge.}
		\label{fig:subdivision}
	\end{subfigure}
	\caption{Refinements.}\label{fig:refinement}
\end{figure}

\begin{definition}\label{def:refinement}
Let $G$ be a graph. A \emph{refinement} of $G$ is a graph $G'$ obtained from $G$ by subdividing edges and adding degree one vertices, see Figure \ref{fig:refinement}.
\end{definition}

\begin{definition}\label{def:sdgon}
Let $G$ be a graph. The \emph{stable divisorial gonality} of $G$ is defined as 
\begin{align*}
\sdgon(G) = \min\{\dgon(G') \mid G' \text{ a refinement of G}\}.
\end{align*}
\end{definition}

\begin{remark}
Since adding leaves to a graph does not change its divisorial gonality, we do not have to consider all refinements, but only those obtained from subdivisions of edges, i.e.\
\begin{align*}
\sdgon(G) = \min\{\dgon(G') \mid G' \text{ a subdivision of G}\}.
\end{align*}
\end{remark}

\subsection{Morphisms and stable gonality}

The stable gonality of a graph is defined using finite harmonic morphisms. A finite harmonic morphism is a graph homomorphism with some extra properties. Recall that a graph homomorphism is a map $\phi\colon G\to H$ that maps vertices of $G$ to vertices of $H$ and preserves edges, i.e., a homomorphism is a map $\phi\colon V(G) \cup E(G) \to V(H)\cup E(H)$ such that 
\begin{itemize}
	\item $\phi(V(G)) \subseteq V(H)$;
	\item $\phi(E(u,v)) \subseteq E(\phi(u)\phi(v))$ for all pairs of vertices $u,v\in V(G)$.
\end{itemize}

\begin{definition}
Let $G$ and $H$ be loopless graphs. A \emph{finite morphism} is a graph homomorphism $\phi\colon G\to H$, together with a map $r\colon E(G) \to \Z_{>0}$ that assigns an index $r(e)$ to every edge $e \in E(G)$.  
\end{definition}

Intuitively, a finite morphism is harmonic if it divides the edges of $G$ equally over the edges of $H$. We make this precise in the following definitions. 

\begin{definition}
Let $G = (V,E)$ and $H = (W,F)$ be loopless graphs and $\phi\colon G\to H$ a finite morphism. Let $v \in V$ be a vertex of $G$ and let $f\in F$ be an edge of $H$ that is incident to $\phi(v)$. The \emph{index of $v$ in the direction of $f$}, denoted by $m_{\phi, f}(v)$, is 
\begin{align*}
m_{\phi, f}(v) = \sum_{e\in E(v), \phi(e) = f} r(e).
\end{align*}
\end{definition}
\begin{definition}
Let $G$ and $H$ be loopless graphs and $\phi\colon G\to H$ a finite morphism. Then $\phi$ is \emph{harmonic} if for every vertex $v$ of $G$ we have $m_{\phi, f}(v) = m_{\phi, f'}(v)$ for all edges $f$ and $f'$ of $H$ incident to $\phi(v)$. We abbreviate $m_\phi(v) := m_{\phi,f}(v)$ for any $f$. 
\end{definition}

\begin{definition}
Let $G=(V,E)$ and $H=(W,F)$ be loopless graphs and $\phi\colon G\to H$ a finite harmonic morphism. Let $f\in F$ be an edge of $H$. The \emph{degree} $\deg(\phi)$ is \begin{align*}
\deg(\phi) = \sum_{e\in E, \phi(e) = f} r_\phi(e). 
\end{align*}
This is independent of the choice of $f$ \cite[Lemma 2.4]{BN2009}. This is also equal to \begin{align*}
\deg(\phi) = \sum_{v\in V, \phi(v) = w} m_\phi(v), 
\end{align*} for any vertex $w\in W$ of $H$.
\end{definition}

We now turn to the definition of stable gonality. Recall the notion of a refinement from Definition~\ref{def:refinement}. 

\begin{definition}
The \emph{stable gonality} $\sgon(G)$ of a graph $G$ is \begin{align*}
\sgon(G) = \min\{\deg(\phi) \mid &\  \phi\colon G' \to T \text{ a finite harmonic morphism,}\\
&\text{ where $T$ is a tree and $G'$ is a refinement of $G$}\}.
\end{align*}
\end{definition}

Notice that finite morphisms are not defined for graphs that contain loops, but since we obtain a loopless graph by subdividing all edges, stable gonality is defined for graphs with loops. 

\begin{remark}
	The stable gonality of a disconnected graph equals the sum of the stable gonality of all components. In the remainder of this paper, we only consider connected graphs. 
\end{remark}

\begin{example}
Let $G$ be a tree. Set $r(e) = 1$ for every edge $e$ and consider the identity map $\phi\colon G \to G$. This is a finite harmonic morphism of degree 1. Thus the stable gonality of a tree equals 1. In fact, trees are the only graphs with stable gonality 1. Indeed, a finite harmonic morphism of degree 1 is injective  
and we cannot map a graph that contains a cycle injectively to a tree. 
\end{example}
\begin{example}
Let $G$ be a cycle with $2n$ vertices. In the previous example we have seen that $\sgon(G) \geq 2$. We give an morphism of degree 2 to show that $\sgon(G) = 2$. Let $T$ be a path on $n+1$ vertices. Assign index 1 to all edges of $G$ and consider the map in Figure \ref{fig:example-cycle-sgon}. This is a finite harmonic morphism of degree 2, hence the stable gonality of an even cycle is 2. If $G$ is an odd cycle, we obtain an even cycle by subdividing one of its edges. We use the same morphism of degree 2 to show that $\sgon(G) = 2$. 
\end{example}

\begin{figure}
	\centering
	\begin{tikzpicture}
	\node[vertex] (v1) at (0,0) {};
	\node[vertex] (v2) at (.8,.8) {};
	\node[vertex] (v3) at (.8,2) {};
	\node[vertex] (v4) at (0,2.8) {};
	\node[vertex] (v5) at (-.8,2) {};
	\node[vertex] (v6) at (-.8,.8) {};
	\draw[edge] (v1) -- (v2) -- (v3) -- (v4) -- (v5) -- (v6) -- (v1);
	\draw[->] (1.4,1.4) -- (2.2,1.4);
	\node[vertex] (v1) at (2.6,0) {};
	\node[vertex] (v2) at (2.6,.8) {};
	\node[vertex] (v3) at (2.6,2) {};
	\node[vertex] (v4) at (2.6,2.8) {};
	\draw[edge] (v1) -- (v2) -- (v3) -- (v4);
	\end{tikzpicture}
	\caption{A finite harmonic morphism from a cycle to a path. Every vertex is mapped to the vertex on its right side and every edge is assigned index $1$. }\label{fig:example-cycle-sgon}
\end{figure}
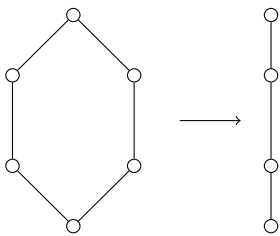

\section{Divisorial gonality is hard}\label{sec:dgon_hard}
\subsection{NP-hardness}

We define the \textsc{Divisorial Gonality} problem as follows: 
\begin{verse}
\textsc{Divisorial Gonality}\\
\textbf{Input:} Graph $G = (V,E)$, integer $k \leq |V|$.\\
\textbf{Question:} Is $\dgon(G) \leq k$?
\end{verse}

We prove that this problem is NP-hard by a reduction from the \textsc{Independent Set} problem. Given an instance $(G,k)$ of the \textsc{Independent Set} problem, we construct a graph $\widehat{G}$. We give a precise relation between the divisorial gonality of $\widehat{G}$ and the maximum size of an independent set of $G$. The constructed graph $\widehat{G}$ contains many parallel edges, and in the proof we use the fact that we either need many chips to move along these edges, or we will never move chips along these edges. 

Let $G=(V,E)$ be a graph and define $M:=3|V|+2|E|+2$. We construct the graph $\widehat{G}$ in the following way. Start with a single vertex $T$. For every $v\in V$ add three vertices: $v,v',T_v$. For every edge $e\in E(u,v)$, add two vertices $e_u$ and $e_v$. The edges of $\widehat{G}$ are as follows. For every $e\in E(u,v)$, add an edge between $e_u$ and $e_v$, add $M$ parallel edges between $u$ and $e_u$ and $M$ parallel edges between $e_v$ and $v$. For every $v\in V$, add three parallel edges between $v'$ and $T_v$, $M$ parallel edges between $v$ and $v'$ and $M$ parallel edges between $T_v$ and $T$. See Figure \ref{F1} for an example.

\begin{figure}[h]
	\centering
		\begin{tikzpicture}[scale=0.8]
		\node[vertex, label=below left:$u$] (u) at (0,0) {};
		\node[vertex, label=below right:$v$] (v) at (3,0) {};
		\node[vertex] (w) at (3,3) {};
		\node[vertex] (x) at (0,3) {};
		\draw[edge] (u)-- node[below] {$e$} (v);
		\draw[edge] (v)--(w)--(x)--(u);
		\draw[edge] (u)--(w);
		\end{tikzpicture}
	\qquad\qquad
		\begin{tikzpicture}[scale=0.8]
		\node[vertex, label=below left:$u$] (u) at (0,0) {};
		\node[vertex, label=below right:$v$] (v) at (3,0) {};
		\node[vertex] (w) at (3,3) {};
		\node[vertex] (x) at (0,3) {};
		\node[added, label=above:$T$] (T) at (1.5,4.5) {};

		\node[added, label=below:$e_u$] (eu) at (1,0) {};
		\node[added, label=below:$e_v$] (ev) at (2,0) {};
		\draw[fatedge] (u)--(eu);
		\draw[fatedge] (ev)--(v);
		\draw[edge] (ev)--(eu);				
		\node[added] (fv) at (3,1) {};
		\node[added] (fw) at (3,2) {};
		\draw[fatedge] (v)--(fv);
		\draw[fatedge] (fw)--(w);
		\draw[edge] (fv)--(fw);				
		\node[added] (gw) at (2,3) {};
		\node[added] (gx) at (1,3) {};
		\draw[fatedge] (w)--(gw);
		\draw[fatedge] (gx)--(x);
		\draw[edge] (gw)--(gx);				
		\node[added] (hx) at (0,2) {};
		\node[added] (hu) at (0,1) {};
		\draw[fatedge] (x)--(hx);
		\draw[fatedge] (u)--(hu);
		\draw[edge] (hx)--(hu);				
		\node[added] (iu) at (0.7,0.7) {};
		\node[added] (iw) at (2.3,2.3) {};
		\draw[fatedge] (u)--(iu);
		\draw[fatedge] (w)--(iw);
		\draw[edge] (iu)--(iw);	
		
		\node[added, label=below right:$v'$] (v') at (4,1) {};			
		\node[added, label=right:$T_v$] (Tv) at (4,2) {};			
		\draw[fatedge] (v)--(v');
		\draw[fatedge] (Tv) to[out=90, in=0] (T);
		\draw[edge] (v')--(Tv);		
		\draw[edge] (v') to[relative, out=40, in=140] (Tv);		
		\draw[edge] (v') to[relative, out=-40, in=-140] (Tv);		

		\node[added, label=below left:$u'$] (u') at (-1,1) {};			
		\node[added, label=left:$T_u$] (Tu) at (-1,2) {};			
		\draw[fatedge] (u)--(u');
		\draw[fatedge] (Tu) to[out=90, in=180] (T);
		\draw[edge] (u')--(Tu);		
		\draw[edge] (u') to[relative, out=40, in=140] (Tu);		
		\draw[edge] (u') to[relative, out=-40, in=-140] (Tu);		

		\node[added] (w') at (2.5,3.5) {};
		\node[added] (Tw) at (2,4) {};			
		\draw[fatedge] (w)--(w');
		\draw[fatedge] (Tw)--(T);
		\draw[edge] (w')--(Tw);		
		\draw[edge] (w') to[relative, out=40, in=140] (Tw);		
		\draw[edge] (w') to[relative, out=-40, in=-140] (Tw);		
		\node[added] (x') at (0.5,3.5) {};
		\node[added] (Tx) at (1,4) {};			
		\draw[fatedge] (x)--(x');
		\draw[fatedge] (Tx)--(T);
		\draw[edge] (x')--(Tx);		
		\draw[edge] (x') to[relative, out=40, in=140] (Tx);		
		\draw[edge] (x') to[relative, out=-40, in=-140] (Tx);		
		\end{tikzpicture}
	\caption{On the left a graph $G$, on the right the corresponding $\widehat{G}$, where the $M$-fold parallel edges are drawn as bold edges. Here $M=12+10+2=24$.}\label{F1}
\end{figure}
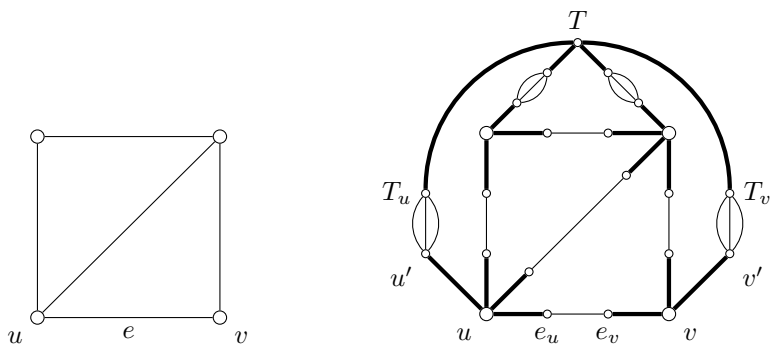

For an effective divisor $D$ on $G$, we define the following equivalence relation $\equiv_{D}$ on $V$:
\begin{equation}
u\equiv_D v \iff \text{$x_u=x_v$ for every $x\in \Z^V$ for which $D-Q(G)x\geq 0$.}
\end{equation}
An edge $e$ for which the ends are equivalent is called \emph{$D$-blocking}. 
Intuitively, this means firing a set that contains exactly one of equivalent $u$ and $v$ results in a divisor that is negative for some vertex.  
Note that if $D$ and $D'$ are equivalent effective divisors, then $\equiv_D=\equiv_{D'}$. We need the following observations.

\begin{lemma}\label{equiv}
	Let $D\geq 0$ be a divisor on $G$ and let $u,v\in V$. Then $u\not\equiv_D v$ if and only if for some effective divisor $D'\sim D$ we can fire a subset $U$ with $u\in U$, $v\not\in U$. In particular, $u\equiv_D v$ if every $u$--$v$ cut has more than $\deg(D)$ edges.
\end{lemma}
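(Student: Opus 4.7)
The plan is to prove the two directions of the equivalence separately and then deduce the ``in particular'' statement.

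For the easier direction ($\Leftarrow$), suppose $D'\sim D$ is effective, $U\subseteq V$ contains $u$ but not $v$, and $D'':=D'-Q(G)\one_U$ is again effective. Write $D'=D-Q(G)y$ for some $y\in\Z^V$, so that $D''=D-Q(G)(y+\one_U)$. Both $y$ and $y+\one_U$ are witnesses with $D-Q(G)x\geq 0$, and their difference at $u$ minus their difference at $v$ equals $1$. Hence they cannot both satisfy $x_u=x_v$, so $u\not\equiv_D v$.

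For the harder direction ($\Rightarrow$), assume $u\not\equiv_D v$ and pick $x\in\Z^V$ with $D-Q(G)x\geq 0$ and $x_u\neq x_v$. Set $D'':=D-Q(G)x$; then $D''$ is effective and equivalent to $D$, and $D\neq D''$ (otherwise $Q(G)x=0$ would force $x$ to be constant on the connected graph $G$, contradicting $x_u\neq x_v$). Apply Lemma~\ref{chain} to obtain a chain $\emptyset\subsetneq U_1\subseteq\cdots\subseteq U_k\subsetneq V$ with all intermediate divisors $D_t=D-Q(G)\sum_{i=1}^t\one_{U_i}$ effective and $D_k=D''$. Since $Q(G)(x-\sum_i\one_{U_i})=0$, connectedness gives $x-\sum_i\one_{U_i}=c\one$ for some constant, so
\begin{equation}
x_u-x_v=\sum_{i=1}^k\bigl(\one_{U_i}(u)-\one_{U_i}(v)\bigr)\neq 0.
\end{equation}
Because the $U_i$ form a chain, no two of them can separate $\{u,v\}$ in opposite ways, so all nonzero terms in the sum have the same sign. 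If the sign is positive, there exists an index $i$ with $u\in U_i$ and $v\notin U_i$; then $D'=D_{i-1}$ (or $D$ if $i=1$) is effective, $D'\sim D$, and firing on $U_i$ yields the effective divisor $D_i$, as required. If the sign is negative, some $U_i$ has $v\in U_i$ and $u\notin U_i$; then taking $D'=D_i$ and firing on the complement $V\setminus U_i$ (which contains $u$ but not $v$) gives $D_{i-1}$, again effective, so the complement is the desired firing set.

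The ``in particular'' statement is immediate: firing on any set $U$ from an effective divisor $D'\sim D$ requires $|E(U,V\setminus U)|\leq\deg(D')=\deg(D)$, because the chips sent across the cut must come from the $\deg(D')$ chips available in $U$. Hence if every $u$--$v$ cut has more than $\deg(D)$ edges, no such firing exists, and the first part forces $u\equiv_D v$. The main obstacle is the direction using the chain lemma, where one must extract from the chain a single set that separates $u$ and $v$ in the right direction; passing to the complement when the chain separates them the ``wrong way'' is the key trick.
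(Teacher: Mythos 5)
Your proof is correct and follows exactly the route the paper intends: the paper's own proof is just the one-line remark that the lemma ``follows directly from Lemma~\ref{chain}'', and your argument is precisely the fleshed-out version of that, including the necessary observation that a chain of sets separates $u$ and $v$ consistently in one direction (with the complement trick handling the opposite orientation). No gaps; the ``in particular'' part is also handled correctly via the cut-size bound on firable sets.
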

\begin{proof}
	This follows directly from Lemma \ref{chain}.
\end{proof}

\begin{lemma}\label{blocking}
	Let $D\geq 0$ be a divisor on $G=(V,E)$. Let $F$ be the set of $D$-blocking edges and let $U$ be a component of the subgraph $(V,E\setminus F)$. Then for every effective divisor $D'\sim D$ we have $\sum_{u\in U} D'(u)=\sum_{u\in U}D(u)$.
\end{lemma}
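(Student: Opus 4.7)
The plan is to reduce the claim to a single firing step via Lemma~\ref{chain} and then use the Laplacian bilinear form together with the blocking property of cut-edges. The case $D=D'$ is trivial, so assume $D\neq D'$; Lemma~\ref{chain} then furnishes a chain $\emptyset\subsetneq U_1\subseteq\cdots\subseteq U_k\subsetneq V$ with effective intermediate divisors $D_0=D, D_1, \ldots, D_k=D'$. By telescoping, it suffices to show that $\sum_{v\in U}D_t(v)=\sum_{v\in U}D_{t-1}(v)$ for each $t$, and this difference equals $-\one_U^\transp Q(G)\one_{U_t}$.

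I would expand this quantity using the standard identity
\[
x^\transp Q(G)\, y \;=\; \sum_{\{a,b\}\in E}(x_a-x_b)(y_a-y_b),
\]
so that only edges in the cut $E(U, V\setminus U)$ can contribute. The key observation is that every such cut-edge is $D$-blocking: if some $\{a,b\}$ with $a\in U$, $b\notin U$ were not in $F$, then $b$ would lie in the same component of $(V, E\setminus F)$ as $a$, contradicting the maximality of $U$.

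Finally, I would apply Lemma~\ref{equiv} to the firing $D_{t-1}\to D_t = D_{t-1}-Q(G)\one_{U_t}$: since $D_{t-1}$ is an effective divisor equivalent to $D$ and firing $U_t$ on it is legal, the contrapositive of Lemma~\ref{equiv} forces $\one_{U_t}(a)=\one_{U_t}(b)$ for every blocking pair $a\equiv_D b$. Hence every term $(\one_U(a)-\one_U(b))(\one_{U_t}(a)-\one_{U_t}(b))$ with $a\in U$, $b\notin U$ vanishes, so $\one_U^\transp Q(G)\one_{U_t}=0$ and the $U$-sum is invariant across the chain.

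The step I expect to be the main obstacle — or at least the one most easily bungled — is correctly lining up the chain firings with Lemma~\ref{equiv}: one needs that $\equiv_{D_{t-1}}=\equiv_D$ (which holds because $D_{t-1}\sim D$), so that the set of blocking edges is the same at every stage, and that each single step $D_{t-1}\to D_t$ is genuinely a legal firing on $U_t$ from an effective divisor equivalent to $D$. Once this setup is in place the rest is a one-line Laplacian calculation.
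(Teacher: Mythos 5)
Your proof is correct and is essentially a rigorous formalization of the paper's one-line argument that no chips can move along a $D$-blocking edge: Lemma~\ref{chain} reduces the claim to single firings, each legal firing set must be constant on $\equiv_D$-classes (by Lemma~\ref{equiv}, or directly from the definition of $\equiv_D$ applied to $x=\sum_{i\le t}\one_{U_i}$), and every edge leaving the component $U$ is blocking, so the bilinear-form expansion of $\one_U^\transp Q(G)\one_{U_t}$ vanishes term by term. No gaps.
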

\begin{proof}
	This holds as no chips can move along a $D$-blocking edge.
\end{proof}

\begin{lemma}\label{gonalpha}
Let $G=(V,E)$ be a graph and let $\widehat{G}$ be the graph as constructed above. Let $\alpha(G)$ be the maximum size of an independent set in $G$. Then the following holds:
$$
\dgon(\widehat{G})=4|V|+|E|+1-\alpha(G).
$$
\end{lemma}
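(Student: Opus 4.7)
The plan is to establish both inequalities $\dgon(\widehat{G}) \leq 4|V|+|E|+1-\alpha(G)$ and $\dgon(\widehat{G}) \geq 4|V|+|E|+1-\alpha(G)$. The common structural observation is: for any effective divisor $D$ of degree $d<M$, Lemma \ref{equiv} makes every $M$-fold parallel bundle $D$-blocking (its single-edge cut has $M>d$ edges), so the $\equiv_D$-classes are exactly the ``super-nodes'' $S_T := \{T\}\cup\{T_v : v\in V\}$ and $S_v := \{v,v'\}\cup\{e_v : e \ni v\}$ for $v\in V$. In any firing $D' = D - Q(\widehat{G})x \geq 0$ the vector $x$ is therefore constant on each super-node; write $t := x|_{S_T}$ and $y_v := x|_{S_v}$. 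Expanding $Qx$ yields the interior invariances $D'(v)=D(v)$, $D'(T)=D(T)$ together with
\[
D'(T_v) = D(T_v) + 3(y_v - t), \qquad D'(v') = D(v') + 3(t - y_v), \qquad D'(e_v) = D(e_v) + y_u - y_v
\]
for each edge $e=\{u,v\}$; effectiveness forces $y_v - t \in \bigl[-\lfloor D(T_v)/3\rfloor,\ \lfloor D(v')/3\rfloor\bigr]$.

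For the upper bound, fix a maximum independent set $I\subseteq V$ and define $D$ by $D(T)=D(v)=1$ for all $v$; $D(v')=D(T_v)=1$ for $v\in I$ and $D(v')=3,\ D(T_v)=0$ for $v\notin I$; and one chip on each edge gadget, placed at the non-$I$ end when exactly one endpoint of the edge lies in $I$, and at the head of a fixed acyclic orientation of $G[V\setminus I]$ when both endpoints lie outside $I$. A direct count gives $\deg D = 4|V|+|E|+1-\alpha(G)$. I verify $\rank(D)\geq 1$ node by node; only some boundary nodes need attention. Firing with $t=0$ and $y = \one_{V\setminus I}$ yields $D'(T_v)=3$ for every $v\notin I$ and $D'(e_u)=1$ for every $u\in I$ on an edge to $V\setminus I$. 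For each remaining target $e_u$ with $u\in V\setminus I$ on an edge $\{u,v\}$ of $G[V\setminus I]$ whose chip lies at $e_v$ (so $u\to v$ in the orientation), taking $t=0$ and $y = \one_A$ for $A$ the forward closure of $v$ in the orientation gives $y_v=1$, $y_u=0$, hence $D'(e_u)=1$. Effectiveness at every other node follows from the identities above: the key point is that $A$ being forward-closed prevents any edge of $G[V\setminus I]$ from being traversed ``against'' the orientation in a way that would make $D'(e_v)$ negative.

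For the lower bound, let $D$ have rank $\geq 1$; we may assume $\deg D<M$. Interior invariance gives $\sum_v D(v)+D(T)\geq |V|+1$. Writing $a = D(v'), b = D(T_v)$, the effectiveness interval and the rank requirements $D'(v')\geq 1$, $D'(T_v)\geq 1$ together force $(a,b)$ into one of the three regimes $a,b\geq 1$, or $a\geq 3$ with $b=0$, or $a=0$ with $b\geq 3$; in particular $a+b\geq 2$. Summing the identities at $e_u$ and $e_v$ gives $D'(e_u)+D'(e_v)=D(e_u)+D(e_v)$, and rank at either of them forces $D(e_u)+D(e_v)\geq 1$. Define
\[
I := \{v\in V : D(v')\leq 2 \text{ and } D(T_v)\leq 2\}.
\]
For $v\in I$ the rank conditions exclude the $\geq 3$ regimes, so $a,b\geq 1$, and the effectiveness interval collapses to $\{0\}$, pinning $y_v=t$ in every firing. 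Consequently, for every edge $e=\{u,v\}$ with $u,v\in I$ one has $D'(e_v)=D(e_v)$ and $D'(e_u)=D(e_u)$ identically, and the rank conditions force $D(e_u), D(e_v)\geq 1$. Summing,
\[
\deg D \;\geq\; (|V|+1) + \bigl(2|I|+3(|V|-|I|)\bigr) + \bigl(|E|+|E[I]|\bigr) \;=\; 4|V|+|E|+1-(|I|-|E[I]|).
\]
Greedily removing one endpoint per edge of $G[I]$ yields an independent subset of $I$ of size $\geq |I|-|E[I]|$, so $\alpha(G)\geq |I|-|E[I]|$ and the bound follows.

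The main obstacle is coordinating the edge-chip placements with $I$ in the upper bound: a uniform placement can leave some $e_u$ unreachable by any firing, and the acyclic orientation on $G[V\setminus I]$ is the device that makes forward closures into valid firing supports, using the identity $D'(e_v)=D(e_v)+y_u-y_v$ in just the right direction to preserve effectiveness at the opposite end of each edge.
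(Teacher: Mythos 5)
Your proof is correct and takes essentially the same route as the paper: both directions rest on the fact that the $M$-fold bundles are $D$-blocking, the same per-gadget chip accounting (at least $1$ on each $v$ and on $T$, at least $2$ or $3$ on $\{v',T_v\}$, at least $1$ or $2$ on $\{e_u,e_v\}$), the bound $|I|-|E[I]|\leq\alpha(G)$, and an upper-bound divisor whose forward-closure firings are a mild variant of the paper's nested sets $W_i$. One cosmetic imprecision: the $\equiv_D$-classes need only be \emph{unions} of your super-nodes (they may merge across the $3$-fold or single edges), not exactly the super-nodes, but your argument only uses that $x$ is constant on each super-node, which is what actually follows from Lemma \ref{equiv}.
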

\begin{proof}
We first show that $\dgon(\widehat{G})\geq 4|V|+|E|+1-\alpha(G)$. Let $D\geq 0$ be a divisor on $\widehat{G}$ with $\rank(D)\geq 1$ and $\deg(D)=\dgon(\widehat{G})$. Consider the equivalence relation $\equiv_D$ on $V(\widehat{G})$. Clearly, $\deg(D)\leq |V(\widehat{G})| = 3|V| + 2|E| + 1 < M$. Hence, by Lemma \ref{equiv}, the $M$-fold edges are $D$-blocking. It follows that $T$ is equivalent to all vertices $T_v, v\in V$, and every vertex $v\in V$ is equivalent to $v'$ and to all vertices $e_v$. By Lemma \ref{blocking}, the number of chips on $\{T\}$ and on each of the sets $\{v\}$, $\{v',T_v\}$ and $\{e_u,e_v\}$ is constant over all effective divisors $D'\sim D$ (for all $u,v\in V$ and $e\in E(u,v)$). Hence, from the fact that $\rank(D)\geq 1$ it follows that for every $u,v\in V$ and $e\in E(u,v)$ we must have
\begin{itemize}
\item $D(T)\geq 1$;
\item $D(v)\geq 1$;
\item $D(v')+D(T_v)\geq 2$ if $v\equiv_D T$ and $D(v')+D(T_v)\geq 3$ otherwise;
\item $D(e_u)+D(e_v)\geq 2$ if $u\equiv_D v$ and $D(e_u)+D(e_v)\geq 1$ otherwise.
\end{itemize}
Let $U_0=\{v\in V\mid v\sim_D T\}$ be the set of nodes in $V$ that are equivalent to $T$. Now let $U_0\cup U_1\cup\cdots\cup U_k$ be the partition of $V$ induced by $\equiv_D$. By the above, we see that 
\begin{align*}
\deg(D) &\geq 1+|V|+(3|V|-|U_0|)+(|E|+|E[U_0]|)\\
&=4|V|+|E|+1 + |E[U_0]|-|U_0|.
\end{align*} 
Since 
\begin{align}\label{alphaU0}
\alpha(G)\geq \alpha(G[U_0])\geq |U_0|-|E[U_0]|,  
\end{align}
we find that $\dgon(G)=\deg(D)\geq 4|V|+|E|+1-\alpha(G)$.

Now we show that equality can be attained. For this, let $S\subseteq V$ be an independent set in $G$ of size $\alpha(G)$. Consider the partition $V\cup\{T\}=U_0\cup U_1\cup\cdots\cup U_k$ where $U_0=S\cup \{T\}$ and $U_1,\ldots, U_k$ are singletons. Every edge of $G$ has endpoints in two distinct sets $U_i$ and $U_j$ with $i<j$. We orient the edge from $U_i$ to $U_j$. We now define the effective divisor $D$ on $\widehat{G}$ as follows. 
\begin{align*}
D(v)&=1\quad\text{for every $v\in V\cup\{T\}$},\\
D(v')=D(T_v)&=1\quad \text{for every $v\in S$},\\
D(T_v)&=3\quad\text{for every $v\in V\setminus S$},\\
D(v')&=0\quad\text{for every $v\in V\setminus S$},\\
D(e_u)&=1\quad\text{for every edge $e$ with tail $u$},\\
D(e_v)&=0\quad\text{for every edge $e$ with head $v$}.
\end{align*}
It is easy to check that $\deg(D)=4|V|+|E|+1-\alpha(G)$. Define $V_i:=U_i\cup\cdots\cup U_k$ for every $i=1,\ldots, k$ and let 
$$
W_i:=V_i\cup\{v'\mid v\in V_i\}\cup \{e_u\mid u\in V_i\text{ is end point of an edge $e$}\}.
$$
Observe that the cut in $\widehat{G}$ induced by the set $W_i$ consists of the edges $e_ue_v$ for every $u\in V\setminus V_i, v\in V_i, e\in E(u,v)$ and the triple edges from $T_u$ to $u'$ for every $u\in V\setminus V_i$. Hence, we can fire the complement of $W_i$. That is,  for every $i$, $D':=D+Q(\widehat{G})\one_{W_i}$ is effective.  Furthermore, $D'(v')\geq 1$ for every $v\in U_i$ and $D'(e_v)\geq 1$ for every edge $e$ with head $v$ in $U_i$. It follows that $D$ has positive rank.
\end{proof}

\begin{theorem}\label{MainTheorem}
	The \textsc{Divisorial Gonality} problem is NP-hard.
\end{theorem}
\begin{proof}
The result follows directly from the NP-hardness of the {\sc Independent Set} problem, Lemma \ref{gonalpha}, and the fact that the transformation uses polynomial time.
\end{proof}

We define the \textsc{Stable Divisorial Gonality} problem similarly:
\begin{verse}
	\textsc{Stable Divisorial Gonality}\\
	\textbf{Input:} Graph $G = (V,E)$, integer $k \leq |V|$.\\
	\textbf{Question:} Is $\sdgon(G) \leq k$?
\end{verse}
We slightly change the arguments above to prove that the \textsc{Stable Divisorial Gonality} problem is NP-hard as well. Let $\widehat{G}'$ be a subdivision of $\widehat{G}$.
Define a \emph{$D$-blocking path} as a path where every internal vertex has degree $2$ and whose ends are equivalent. For any divisor $D$ with $\deg(D) < M$, Lemma \ref{equiv} guarantees that $M$ parallel edges in $\widehat{G}$ are subdivided into $D$-blocking paths in $\widehat{G}'$. Furthermore, Lemma \ref{blocking} is still valid for $D$-blocking paths. Following the first part of the proof of Lemma \ref{gonalpha}, it follows that $\dgon(\widehat{G}') \geq 4|V|+|E|+1-\alpha(G) = \dgon(\widehat{G})$. Therefore $\sdgon(\widehat{G}) = \dgon(\widehat{G})$ holds, which proves the following. 
\begin{theorem}
	The stable divisorial gonality problem is NP-hard. 
\end{theorem}

\subsection{APX-hardness}
We consider the optimization variant of the \textsc{Divisorial Gonality} problem, where we ask for the divisor with minimum degree among all divisors with rank at least $1$. 
We will prove that this optimisation problem is APX-hard. For this, we use the reduction above, restricted to subcubic graphs, since \textsc{Independent Set} is APX-hard for subcubic graphs \cite{APX}. For APX-hardness we also need to be able to construct good independent sets from good divisors in polynomial time.
It follows that there is a PTAS reduction of `maximum independent set on cubic graphs'  to `finding a minimum degree divisor of positive rank'. Since the former problem is APX-hard, also the second problem is APX-hard. 
It remains open whether or not  finding a positive rank divisor of minimum degree is in APX.

\begin{lemma} \label{lem:apx-poly-time}
	Let $D\geq0$ be a divisor on $G$. For any two vertices $u,v \in V(G)$, we can determine whether $u \sim_D v$ in polynomial time. 
\end{lemma}
\begin{proof} Let $D_u$ and $D_v$ be the unique $u$-reduced and $v$-reduced divisors equivalent to $D$, respectively (see Lemma \ref{lem:unique-v-reduced}). Let $x$ be such that $D_u-D_v = Q(G)x$. Then $u$ and $v$ are equivalent if and only if $x_u=x_v$. Since we can compute $D_u$ and $D_v$ in polynomial time, this completes the proof. 
\end{proof}

\begin{lemma}\label{lem:apx-bound} Let $G = (V,E)$ be a subcubic graph and let $\widehat{G}$ be as constructed above.
Let $D\geq0$ be a divisor on $\widehat{G}$ of positive rank and degree $\deg(D) \leq (1+\epsilon) \dgon(\widehat{G})$. Then we can find in polynomial time an independent set $S$ in $G$ of size at least $(1-22\epsilon)\alpha(G)$.
\end{lemma}
\begin{proof} We may assume that $\deg(D) \leq |V(\widehat{G})|$ (otherwise replace $D$ by the divisor with one chip on every vertex).  

As in the proof of Lemma \ref{gonalpha}, we define $U_0$ to be the vertices in $V$ that are equivalent to $T$. We have
\begin{align*}
\deg(D) \geq 4|V| + |E| + 1 + |E[U_0]| - |U_0|.
\end{align*}

As $G$ is subcubic, we get
\begin{align*}
\deg(D) \geq (11/2) |V| + 1 + |E(U_0)| - |U_0|.
\end{align*}
We construct an independent set $S$ in $G$ of size at least $|U_0| -|E[U_0]|$ by starting with the set $U_0$ and then deleting an endpoint for every edge in $E[U_0]$. Thus there exists an independent set $S$ in $G$ of size
\begin{align*}
|S| &\geq |U_0|-|E[U_0]| \geq (11/2) |V| + 1 - \deg(D) \\
 &\geq (11/2) |V| + 1 - (1+\epsilon) \dgon(\widehat{G}) \\
&\geq (11/2) |V| + 1 - (1+\epsilon) ((11/2)|V| + 1 - \alpha(G))\\
&= -\epsilon ((11/2)|V| + 1 ) + (1+\epsilon)\alpha(G).
\end{align*}
Since $G$ is subcubic, we have $\alpha(G)\geq(1/4)|V|$. Hence, we have
\begin{align*}
|S| &\geq -\epsilon (22\alpha(G) +1) + (1+\epsilon)\alpha(G) \\
&= (1-21\epsilon)\alpha(G) -\epsilon\\
&\geq (1-22\epsilon)\alpha(G).
\end{align*}
Notice that by Lemma \ref{lem:apx-poly-time}, we can find $S$ in polynomial time. 
\end{proof}

\begin{theorem}
	The \textsc{Divisorial Gonality} problem is APX-hard. 
\end{theorem}
\begin{proof}
	In \cite{APX} it was shown that the \textsc{Independent Set} problem is APX-hard, even on subcubic graphs. From this and from Lemma  \ref{lem:apx-bound} the result follows. 
\end{proof}

We consider the optimization version of the \textsc{Stable Divisorial Gonality} problem as well. Again, we use the same proof to show that this is APX-hard. 

\begin{theorem}
	The \textsc{Stable Divisorial Gonality} problem is APX-hard. 
\end{theorem}

\section{Stable gonality is hard}\label{sec:sgon_hard}

\subsection{NP-hardness}
We define the \textsc{Stable Gonality} problem as follows:
\begin{verse}
	\textsc{Stable Gonality}\\
	\textbf{Input:} Graph $G = (V,E)$, integer $k \leq |V|$.\\
	\textbf{Question:} Is $\sgon(G) \leq k$?
\end{verse}

In this section we prove that the \textsc{Stable Gonality} problem is NP-hard. 

Let $G$ be a simple graph and write $n$ for its number of vertices. We construct a multigraph $\widehat{G}$ from $G$ in two steps:
\begin{enumerate}
	\item Add a vertex $c$ to the vertex set of $G$, and connect this vertex with exactly one vertex of $G$ (chosen arbitrarily). Then, add $n-1$ more vertices to the vertex set, and connect these vertices with $c$ (and no other vertices). We will call this process ``adding a star of size $n$ with centre $c$''. 
	\item Add a vertex $t$ to the vertex set, and for every other vertex $u$ (including the vertices added in the previous step) draw $n$ parallel edges between $t$ and $u$.
\end{enumerate}
This construction is illustrated in Figure \ref{fig:constructie}. 	

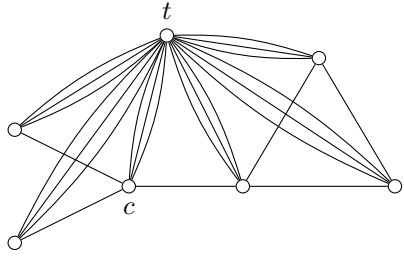
\begin{figure}
	\centering
	\begin{tikzpicture}
	\node[vertex] (u) at (0,0) {};
	\node[vertex] (v) at (2,0) {};
	\node[vertex] (w) at (1,1.7) {};
	\node[vertex, label=above:$t$] (t) at (-1,2) {};
	\node[vertex, label=below:$c$] (c) at (-1.5,0) {};
	\node[vertex] (a) at (-3,0.75) {};
	\node[vertex] (b) at (-3,-0.75) {};
	\draw[edge] (u) -- (v) -- (w) -- (u); 
	\draw[edge] (u) -- (c);
	\draw[edge] (a) -- (c) -- (b); 
	\draw[edge] (t) -- (a) (t) -- (b) (t) -- (c);
	\draw[edge] (t) -- (u) (t) -- (v) (t) -- (w);
	\draw[edge] (t) to[relative, in=170, out=10] (a);
	\draw[edge] (a) to[relative, in=170, out=10] (t);
	\draw[edge] (t) to[relative, in=170, out=10] (b);
	\draw[edge] (b) to[relative, in=170, out=10] (t);
	\draw[edge] (t) to[relative, in=170, out=10] (c);
	\draw[edge] (c) to[relative, in=170, out=10] (t);
	\draw[edge] (t) to[relative, in=170, out=10] (u);
	\draw[edge] (u) to[relative, in=170, out=10] (t);
	\draw[edge] (t) to[relative, in=170, out=10] (v);
	\draw[edge] (v) to[relative, in=170, out=10] (t);
	\draw[edge] (t) to[relative, in=170, out=10] (w);
	\draw[edge] (w) to[relative, in=170, out=10] (t);
	\end{tikzpicture}
	\caption{The graph $\widehat{G}$ when we start with a triangle $G$. }
	\label{fig:constructie}
\end{figure}

\begin{lemma}\label{lem:sgon-vertexcover}
	Let $G$ be a simple graph and $\widehat{G}$ the graph constructed above. Denote by $n$ the number of vertices of $G$ and by $\tau(G)$ the minimal size of a vertex cover of $G$. Then $\sgon(\widehat{G}) \leq n+k+1$ if and only if $\tau(G) \leq k$. 
\end{lemma}

\begin{proof}
	Suppose that $G$ has a vertex cover of size at most $k$, let $S$ be such a vertex cover. We will construct a finite harmonic morphism from $\widehat{G}$ to a tree.
	Define $T$ as a star with $2n-|S|$ vertices, write $t'$ for the centre and $v_1', \ldots, v'_{2n-|S|-1}$ for the other vertices. Now number the vertices in $V(\widehat{G}) \backslash(S \cup \{t,c\})$ from $v_1$ to $v_{2n-|S|-1}$. Define $\phi\colon V(\widehat{G}) \to T$:
	\begin{align*}
	\phi(v) = \begin{cases}
	t' &\text{if $v=t$, $v=c$ or $v\in S$,}\\
	v'_i & \text{if $v=v_i$.}
	\end{cases}
	\end{align*}
	See Figure \ref{fig:map} for an illustration.
	We refine $\widehat{G}$ and extend $\phi$ to this refinement in order to make $\phi$ a finite harmonic morphism. 
	\begin{enumerate}
		\item For all edges $e=uv \in E(\widehat{G})$ with $\phi(u) = \phi(v) = t'$, subdivide $uv$ with a vertex $w$ and add a leaf $l$ to $t'$. Define $\phi(w) = l$. If $u = t$, set $r_{\phi}(uw) = n$. Write $T'$ for the extended tree $T$ after this step. See Figure \ref{fig:step1}.
		\item For all vertices $v\in S\cup\{c,t\}$ and for all leaves $l\in V(T')$, if $v$ has no neighbour that is mapped to $l$, add a leaf $v_l$ to $v$ and set $\phi(v_l) = l$. If $v = t$, set $r_\phi(vv_l) = n$. Write $\widehat{G}'$ for the refinement of $\widehat{G}$ after this step. See Figure \ref{fig:step2}. 
		\item For all edges $e \in E(\widehat{G}')$, if we did not mention $r_{\phi}(e)$ explicitly before, we set $r_{\phi}(e) = 1$. 
	\end{enumerate}
	We prove that $\phi\colon \widehat{G}' \to T'$ is a finite harmonic morphism. It is clear that $\phi$ is a finite morphism. For harmonicity, we check all vertices $v\in V(\widehat{G}')$: 
	\begin{itemize}
		\item Suppose that $v = t$. Let $e \in E(T')$. \begin{itemize}
			\item Suppose that $e = t'v_i'$, then $m_{\phi,e}(v) = n$, since the edges $tu$ with $\phi(u) = v_i'$ are precisely the $n$ edges $tv_i$ all with index $1$. 
			\item Assume $e = t'l$ for some leaf $l$ that is added in step 1. Then there is exactly one edge $tu$ mapped to $e$, where $u$ is a vertex that is added in either step 1 or 2. This edge has index $n$, so $m_{\phi,e}(v) = n$. 
		\end{itemize}
		We conclude that $\phi$ is harmonic at $t$ and $m_\phi(t) = n$. 
		\item Suppose that $v \in S \cup \{c\}$. Let $e\in E(T')$. 
		\begin{itemize}
			\item Assume there is a neighbour $u$ of $v$ with $u$ a vertex of $\widehat{G}$ or a vertex that is added in step 1, such that $\phi(uv) = e$. By the construction of $\phi$ this is the only edge incident to $v$ that is mapped to $e$, and this edge has index $1$. 
			\item Suppose that there is no neighbour $u$ of $v$ with $u$ a vertex of $\widehat{G}$ or a vertex that is added in step 1, such that $\phi(uv) = e$. Then we added a leaf to $v$ in step 2 and assigned index $1$ to this new edge.  
		\end{itemize} 
		So $\phi$ is harmonic at $v$ and $m_{\phi}(v)=1$. 
		\item If $v$ is any other vertex, then $\phi(v)$ is a leaf, thus $\phi$ is harmonic at $v$. 
	\end{itemize}
	Now we compute the degree of $\phi$. For this we look at $t'$. The vertices that are mapped to $t'$ are $t$, $c$ and all vertices in $S$. It follows that $\deg(\phi) = m_{\phi}(t) + m_\phi(c) + \sum_{v\in S} m_\phi(v) = n + 1 + |S| \leq n+ 1 + k$. So $\sgon(\widehat{G}) \leq n+k+1$. 
	
	Now suppose that $\widehat{G}$ has stable gonality at most $n + k + 1$, i.e.\ there exists a refinement $\widehat{G}'$ of $\widehat{G}$, a tree $T$ and a map $\phi\colon \widehat{G}' \to T$ with degree $d \leq n + k + 1$. 
	
	Consider the graph $T \backslash \phi(t)$; as $T$ is a tree, $T \backslash \phi(t)$ is a forest. Define $C$ as a component of $T \backslash \phi(t)$ for which the size of $\phi^{-1}(C) \cap \widehat{G}$ is maximal, and let $\gamma = \big| \phi^{-1}(C) \cap \widehat{G} \big|$.
	
	Denoting by $e_C$ the unique edge between $\phi(t)$ and $C$, the degree of $\phi$ is at least the number of edges of $\widehat{G}'$ that are mapped to $e_C$. Because $t$ is connected to each vertex in $\widehat{G}$ with exactly $n$ edges, $t$ and $\phi^{-1}(C) \cap \widehat{G}$ are connected by $\gamma n$ edges (in $\widehat{G}$). Each such edge is subdivided into a path in $\widehat{G}'$, which must then be mapped to a path from $\phi(t)$ to $C$. This path has to include the edge $e_C$, and therefore the degree of $\phi$ is at least $\gamma n$. \\

	We distinguish three cases for the value of $\gamma$:
	\begin{enumerate}
		\item $\gamma = 0$. Then all vertices $v$ of $\widehat{G}$ are mapped to $\phi(t)$ under $\phi$, and therefore the degree of $\phi$ is bounded below by the number of vertices in $\widehat{G}$, which is equal to $2n + 1$. 
		\item $\gamma \geq 2$. As mentioned before, a lower bound for the degree of $\phi$ is given by $\gamma n \geq 2n$. 
		\item $\gamma = 1$. We look more closely at the size of the preimage of $e_C$. The edges attached to $t$ add at least $n$ to this size. Furthermore, every other vertex of $\widehat{G}'$ that is mapped to $\phi(t)$ increases this number by at least one, as the index of this vertex in the direction of $e_C$ is at least one. We therefore deduce a lower bound on the number of vertices other than $t$ of $\widehat{G}'$ that are mapped to $\phi(t)$: 
		\begin{itemize}
			\item Every $v \in \widehat{G} \backslash t$ such that $\phi(v) = \phi(t)$ increases this number by at least one.
			\item Every edge $uv$ of $\widehat{G} \backslash t$ such that $\phi(u) \neq \phi(t) \neq \phi(v)$ (we also have $\phi(u) \neq \phi(v)$ as $\gamma = 1$) has to be subdivided, and the unique path from $\phi(u)$ to $\phi(v)$ goes through $\phi(t)$, hence this also adds another vertex in $\widehat{G}'$ that is mapped to $\phi(t)$.
		\end{itemize}
		This shows that the degree of $\phi$ is bounded below by 
		\[
		n + |\{v \in V(\widehat{G}\backslash t) \mid \phi(v) = \phi(t)\}| + |\{uv \in E(\widehat{G}\backslash t) \mid \phi(u) \neq \phi(t) \neq \phi(v)\}|.
		\]
		By applying Lemma~\ref{lem:vertexcover} found below with $H = \widehat{G} \backslash t$ and $S = \{v \in V(\widehat{G} \backslash t) \mid \phi(v) = \phi(t)\}$, the degree of $\phi$ is bounded below by
		\[
		n + \tau(\widehat{G} \backslash t) = n + \tau(G) + 1.
		\]
	\end{enumerate} 
	
	Certainly $\tau(G) \leq n - 1$, hence $n + \tau(G) + 1 \leq 2n$. Because the three cases are exhaustive, we have 
	\[
	\textup{sgon}(\widehat{G}) \geq \min \{2n + 1, 2n, n + \tau(G) + 1\} = n + \tau(G) + 1.
	\]
	As we assumed that $\textup{sgon}(\widehat{G}) \leq n + k + 1$, we find $n + \tau(G) + 1 \leq n + k + 1$, and thus $\tau(G) \leq k$.
\end{proof}

To complete the proof of Theorem~\ref{lem:sgon-vertexcover} we conclude with a simple lemma stating that minimising a quantity of the type found in the proof of the theorem is realised by the minimal vertex cover:

\begin{lemma}\label{lem:vertexcover}
	Let $H$ be a graph. Then 
	\[
	\min_{S \subseteq V(H)} \big( |S| + |\{uv \in E(H) \mid u,v \notin S\}| \big) = \tau(H),
	\]
	where $\tau(H)$ denotes the size of a minimal vertex cover of $H$.
\end{lemma}

\begin{proof}
	If we let $S$ be any minimal vertex cover, then $|S| = \tau(H)$ and $|\{uv \in E(H) \mid u,v \notin S\}| = 0$, hence the left hand side is at most the right hand side. To argue the other inequality, assume $|S| = \tau(H) - a$ for some $a > 0$ and $|\{uv \in E(H) \mid u,v \notin S\}| < a$. For every edge in $\{uv \in E(H) \mid u,v \notin S\}$ choose one of the endpoints and add it to $S$, obtaining a new set $S'$. By construction $\{uv \in E(H) \mid u,v \notin S'\} = \emptyset$ and thus $S'$ is a vertex cover. However $|S'| = |S| + |\{uv \in E(H) \mid u,v \notin S\}| < \tau(H)$, contradicting the minimality of $\tau(H)$. Thus the desired equality holds.
\end{proof}

\begin{theorem}\label{thm:sgonnphard}
	The \textsc{Stable Gonality} problem is NP-hard. 
\end{theorem}

\begin{proof}
	Let $(G,k)$ be an instance of the \textsc{Vertex Cover} problem and let $\widehat{G}$ be the graph constructed above. By Lemma \ref{lem:sgon-vertexcover} it follows that the \textsc{Stable Gonality} problem is NP-hard. 
\end{proof}

\begin{figure}
	\centering
	\begin{subfigure}{\textwidth}
		\centering
		\begin{tikzpicture}
		\node[vertex, label=below:$u$] (u) at (0,0) {};
		\node[vertex, label=below:$v$] (v) at (2,0) {};
		\node[vertex, label=above:$v_1$] (w) at (1,1.7) {};
		\node[vertex, label=left:$t$] (t) at (-1,2) {};
		\node[vertex, label=below:$c$] (c) at (-1.5,0) {};
		\node[vertex, label=left:$v_2$] (a) at (-3,0.75) {};
		\node[vertex, label=left:$v_3$] (b) at (-3,-0.75) {};
		\draw[edge] (u) -- (v) -- (w) -- (u); 
		\draw[edge] (u) -- (c);
		\draw[edge] (a) -- (c) -- (b); 
		\draw[edge] (t) -- (a) (t) -- (b) (t) -- (c);
		\draw[edge] (t) -- (u) (t) -- (v) (t) -- (w);
		\draw[edge] (t) to[relative, in=170, out=10] (a);
		\draw[edge] (a) to[relative, in=170, out=10] (t);
		\draw[edge] (t) to[relative, in=170, out=10] (b);
		\draw[edge] (b) to[relative, in=170, out=10] (t);
		\draw[edge] (t) to[relative, in=170, out=10] (c);
		\draw[edge] (c) to[relative, in=170, out=10] (t);
		\draw[edge] (t) to[relative, in=170, out=10] (u);
		\draw[edge] (u) to[relative, in=170, out=10] (t);
		\draw[edge] (t) to[relative, in=170, out=10] (v);
		\draw[edge] (v) to[relative, in=170, out=10] (t);
		\draw[edge] (t) to[relative, in=170, out=10] (w);
		\draw[edge] (w) to[relative, in=170, out=10] (t);

		\draw[->] (3.5,.75) -- (4.5,.75);
		
		\node[vertex, label=below:$t'$] (t') at (7,.6){};
		\node[vertex, label=right:$v_1'$] (x1) at (7,1.7){};
		\node[vertex, label=above:$v_2'$] (x2) at (8,0){};
		\node[vertex, label=above:$v_3'$] (x3) at (6,0){};
		\draw[edge] (t') -- (x1);
		\draw[edge] (t') -- (x2);
		\draw[edge] (t') -- (x3);
		
		\end{tikzpicture}
		\caption{\footnotesize The map $\phi\colon \widehat{G} \to T$.}
		\label{fig:map}
	\end{subfigure}
	~ 
	
	\begin{subfigure}{\textwidth}
		\centering
		\begin{tikzpicture}
		\node[vertex, label=below:$u$] (u) at (0,0) {};
		\node[vertex, label=below:$v$] (v) at (2,0) {};
		\node[vertex] (w) at (1,1.7) {};
		\node[vertex, label=left:$t$] (t) at (-1,2) {};
		\node[vertex, label=below:$c$] (c) at (-1.5,0) {};
		\node[vertex] (a) at (-3,0.75) {};
		\node[vertex] (b) at (-3,-0.75) {};
		\draw[edge] (u) -- (v) node [midway, added] {} ;
		\draw (v) -- (w) -- (u); 
		\draw[edge] (u) -- node [midway, added] {} (c);
		\draw[edge] (a) -- (c) -- (b); 
		\draw[edge] (t) -- (a);
		\draw[edge] (t) -- (b);
		\draw[edge] (t) -- node [midway, added] {} (c);
		\draw[edge] (t) -- node [midway, added] {} (u);
		\draw[edge] (t) -- node [pos=.4, added] {} (v);
		\draw[edge] (t) -- (w);
		\draw[edge] (t) to[relative, in=170, out=10] (a);
		\draw[edge] (a) to[relative, in=170, out=10] (t);
		\draw[edge] (t) to[relative, in=170, out=10] (b);
		\draw[edge] (b) to[relative, in=170, out=10] (t);
		\draw[edge] (t) to[relative, in=170, out=10] node [midway, added] {} (c) ;
		\draw[edge] (c) to[relative, in=170, out=10] node [midway, added] {} (t) ;
		\draw[edge] (t) to[relative, in=170, out=10]  node [midway, added] {} (u);
		\draw[edge] (u) to[relative, in=170, out=10]  node [midway, added] {} (t);
		\draw[edge] (t) to[relative, in=170, out=10]  node [pos=.4, added] {} (v);
		\draw[edge] (v) to[relative, in=170, out=10]  node [pos=.6, added] {} (t);
		\draw[edge] (t) to[relative, in=170, out=10] (w);
		\draw[edge] (w) to[relative, in=170, out=10] (t);

		\draw[->] (3.5,.75) -- (4.5,.75);
		
		\node[vertex] (t') at (7,.6) {};
		\begin{scope}[xshift=7cm, yshift=.6cm]
		\foreach \x in {0,1,...,13}
		{
			\node[vertex] (x) at (25.7*\x:1.5) {};
			\draw[edge] (t') -- (x);
		}
		\end{scope}
		\end{tikzpicture}
		\caption{\footnotesize After step 1.}
		\label{fig:step1}
	\end{subfigure}
	~ 
	
	\begin{subfigure}{\textwidth}
		\centering
		\begin{tikzpicture}
		\node[vertex] (u) at (0,0) {};
		\node[vertex] (v) at (2,0) {};
		\node[vertex] (w) at (1,1.7) {};
		\node[vertex] (t) at (-1,2) {};
		\node[vertex] (c) at (-1.5,0) {};
		\node[vertex] (a) at (-3,0.75) {};
		\node[vertex] (b) at (-3,-0.75) {};
		\draw[edge] (u) -- (v) node [midway, added] {} ;
		\draw (v) -- (w) -- (u); 
		\draw[edge] (u) -- node [midway, added] {} (c);
		\draw[edge] (a) -- (c) -- (b); 
		\draw[edge] (t) -- (a);
		\draw[edge] (t) -- (b);
		\draw[edge] (t) -- node [midway, added] {} (c);
		\draw[edge] (t) -- node [midway, added] {} (u);
		\draw[edge] (t) -- node [pos=.4, added] {} (v);
		\draw[edge] (t) -- (w);
		\draw[edge] (t) to[relative, in=170, out=10] (a);
		\draw[edge] (a) to[relative, in=170, out=10] (t);
		\draw[edge] (t) to[relative, in=170, out=10] (b);
		\draw[edge] (b) to[relative, in=170, out=10] (t);
		\draw[edge] (t) to[relative, in=170, out=10] node [midway, added] {} (c) ;
		\draw[edge] (c) to[relative, in=170, out=10] node [midway, added] {} (t) ;
		\draw[edge] (t) to[relative, in=170, out=10]  node [midway, added] {} (u);
		\draw[edge] (u) to[relative, in=170, out=10]  node [midway, added] {} (t);
		\draw[edge] (t) to[relative, in=170, out=10]  node [pos=.4, added] {} (v);
		\draw[edge] (v) to[relative, in=170, out=10]  node [pos=.6, added] {} (t);
		\draw[edge] (t) to[relative, in=170, out=10] (w);
		\draw[edge] (w) to[relative, in=170, out=10] (t);
		\begin{scope}[xshift=-1.5cm]
		\foreach \x in {1,2,...,8}
		{
			\node[added] (x) at (210+16.6*\x:.5) {};
			\draw[edge] (c) -- (x);
		}
		\end{scope}
		\foreach \x in {1,2,...,8}
		{
			\node[added] (x) at (180+20*\x:.5) {};
			\draw[edge] (u) -- (x);
		}
		\begin{scope}[xshift=2cm]
		\foreach \x in {1,2,...,9}
		{
			\node[added] (x) at (180+30*\x:.5) {};
			\draw[edge] (v) -- (x);
		}
		\end{scope}
		\begin{scope}[xshift=-1cm, yshift=2cm]
		\foreach \x in {1,2}
		{
			\node[added] (x) at (60*\x:.5) {};
			\draw[edge] (t) -- (x);
		}
		\end{scope}

		\draw[->] (3.5,.75) -- (4.5,.75);

		\node[vertex] (t') at (7,.6) {};
		\begin{scope}[xshift=7cm, yshift=.6cm]
		\foreach \x in {0,1,...,13}
		{
			\node[vertex] (x) at (25.7*\x:1.5) {};
			\draw[edge] (t') -- (x);
		}
		\end{scope}
		\end{tikzpicture}
		\caption{\footnotesize After step 2.}
		\label{fig:step2}
	\end{subfigure}
	\caption{The map $\phi\colon \widehat{G} \to T$ and the step-by-step extension to a morphism where we take $\{u,v\}$ as  the vertex cover of the original triangle.}\label{fig:morphism}
\end{figure}
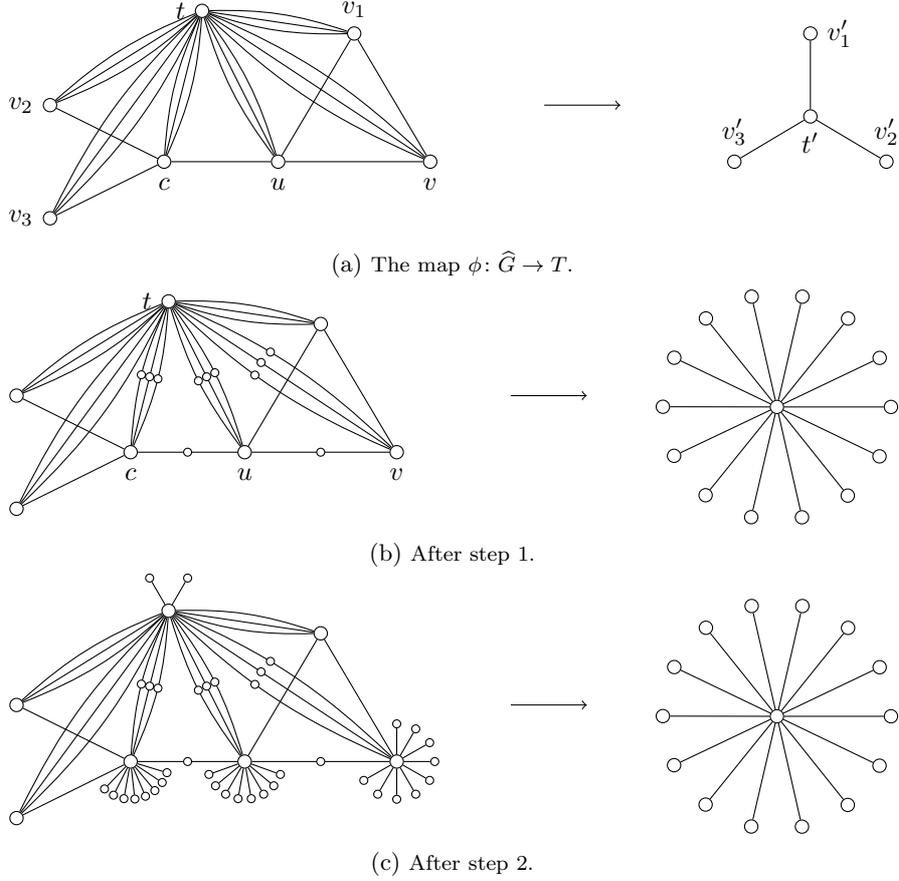

\subsection{APX-hardness}

The \textsc{Vertex Cover} problem is hard for subcubic graphs \cite{APX}. So we will use the NP-hardness reduction above to show APX-hardness of the \textsc{Stable Gonality} problem. Notice that we consider the optimisation variant of the \textsc{Stable Gonality} problem here, i.e., we ask for a finite harmonic morphism from a refinement to a tree with minimum degree.

\begin{lemma}\label{lem:sgon-apx-bound} Let $G = (V,E)$ be a subcubic graph with $|V| \geq 8$ and let $\widehat{G}$ be as constructed above.
	Let $\widehat{G}'$ be a refinement of $\widehat{G}$, $T$ a tree and $\phi$ a finite harmonic morphism of degree at most $(1+\epsilon) \sgon(\widehat{G})$. Then we can find, in polynomial time, a vertex cover $S$ of $G$ of size at most $(1+32\epsilon)\tau(G)$.
\end{lemma}
\begin{proof} 
	Define $\gamma = \max\{\big| \phi^{-1}(C) \cap \widehat{G} \big| \mid C \text{ component of } T\backslash\{\phi(t)\}\}$ as in the proof of Lemma \ref{lem:sgon-vertexcover}. We distinguish cases for the value of $\gamma$. 
	\begin{enumerate}
		\item $\gamma = 1$. 
		Construct a vertex cover of $G$ as follows. Define $S = \{v\in V \mid \phi(v) = \phi(t)\}$. For every edge in $E[V\backslash S]$, add one of its endpoints. Write $S'$ for this set. 
		
		Notice that $S'$ is a vertex cover with size at most $|S| + |E[V\backslash S]|$. From the proof of Lemma \ref{lem:sgon-vertexcover}, it follows that
		\begin{align*}
		|S'| &\leq |S| + |E[V\setminus S]| \\ 
		&\leq \deg(\phi) - n-1 \\
		&\leq (1+\epsilon)\sgon(\widehat{G}) - n -1 \\
		&= (1+\epsilon)(n + \tau(G) + 1) -n-1 \\
		&\leq \epsilon n  + (1+\epsilon)\tau(G) + \epsilon.
		\end{align*}
		Since $G$ is subcubic, it holds that $\tau(G) \geq \frac{1}{3}n$, hence
		\begin{align*}
		|S'| &\leq \epsilon 3\tau(G) + (1+\epsilon)\tau(G) + \epsilon\\
		&\leq (1+ 5\epsilon)\tau(G). 
		\end{align*}
		\item $\gamma = 0$ or $\gamma \geq 2$. In the proof of Lemma \ref{lem:sgon-vertexcover}, we have seen that $\deg(\phi) \geq 2n$. Thus
		\begin{align*}
		2n &\leq (1+\epsilon)\sgon(\widehat{G}) \\
		&\leq (1+\epsilon)(n+\tau(G) + 1).
		\end{align*}
		We derive that 
		\begin{align*}
		\epsilon &\geq \frac{n-\tau(G) - 1}{n+\tau(G)+1} \\
		&\geq \frac{n-\tau(G) - 1}{2n}. \\ 
		\end{align*}
		Since $G$ is subcubic, it holds that $\tau(G) \leq \frac{3}{4}n$. Because of this and because we assumed that $n\geq 8$, it follows that
		\begin{align*}
		\epsilon &\geq \frac{\frac{1}{4}n - 1}{2n} \\ 
		&\geq \frac{\frac{1}{8}n}{2n} = \frac{1}{16}n
		\end{align*}
		The set $V$ is clearly a vertex cover and has size $|V| \leq (1+32\epsilon)\tau(G)$, since $\tau(G) \geq \frac{1}{3}n$ for subcubic graphs. 
	\end{enumerate} 
In both cases we found a vertex cover of $G$ of size at most $(1+32\epsilon)\tau(G)$. Notice that these vertex covers can be computed in polynomial time. 
\end{proof}

\begin{theorem}
	The \textsc{Stable Gonality} problem is APX-hard. 
\end{theorem}
\begin{proof}
	In \cite{APX} it was shown that the \textsc{Vertex Cover} problem is APX-hard, even on subcubic graphs. It follows that the \textsc{Vertex Cover} problem is APX-hard on subcubic graphs with at least 8 vertices as well. From this and from Lemma \ref{lem:sgon-apx-bound} the result follows. 
\end{proof}

\section{Divisorial and stable gonality are unbounded in one another} \label{sec:untied}

In the introduction we briefly mentioned some inequalities regarding different graph invariants; a summary of all relations can be found in Figure~\ref{fig:relation2}. 
In particular, it is known that treewidth is a lower bound for all notions of gonality \cite{GonTW}, and by definition stable divisorial gonality is a lower bound for divisorial gonality.  
It is mentioned in \cite{CornelissenKatoKool} that it is a lower bound for the stable gonality as well. The idea is as follows: let $\phi\colon G' \to T$ be a finite harmonic morphism of degree $k$ from a refinement $G'$ to a tree $T$. It is possible to refine $G'$ to a graph $G''$ such that the pre-image divisor, that assigns $m_\phi(v)$ chips to every vertex in $\phi^{-1}(t)$ for some vertex $t$ of $T$, has rank at least $1$. 
Furthermore, through methods of algebraic geometry the upper bound $\frac{b_1 + 3}{2}$ has been established for stable gonality, where $b_1(G)$ is the first Betti number and equals $|E(G)| - |V(G)| + 1$ \cite{CornelissenKatoKool}. It has been conjectured that this also holds for divisorial gonality \cite{Baker2008}. 

On the other hand, $b_1$ cannot be bounded above by a function of any notion of gonality alone; consider the banana graph consisting of $2$ vertices with $m$ parallel edges between them. This graph has divisorial and stable gonality $2$ (obtained by subdividing every edge once), whilst the value of $b_1 = m - 1$ can be arbitrarily high.

The notions of gonality cannot be bounded in terms  of treewidth either. In \cite{Hendrey16}, it is shown that fan-graphs (a path with a universial vertex) have arbitrarily high divisorial gonality, but treewidth 2. This argument works for stable divisorial gonality as well. 

This section is devoted to showing that stable and divisorial gonality are \emph{unbounded} in terms of one another. We construct two families of graphs, in one of which the stable gonality is bounded but the divisorial gonality unbounded, and vice versa in the other. These results function as a justification for the different approaches taken in proving the NP-hardness of stable gonality and divisorial gonality: knowledge of one of the two invariants provides little knowledge of the other. 

Note that it follows that both the stable and divisorial gonality are unbounded in the treewidth and stable divisorial gonality, and that $b_1$ is unbounded in the divisorial gonality.

\subsection{Divisorial gonality is unbounded in stable gonality}

\begin{figure} [h]
	\centering
	\begin{subfigure}{\linewidth}
		\centering
	\begin{tikzpicture}
	\node[vertex] (v1) at (0,0) {};
	\node[vertex] (v2) at (1,0) {};
	\node[vertex] (v3) at (2,0) {};
	\node[vertex] (v4) at (3,0) {};
	\node[vertex] (v5) at (4,0) {};
	\node[vertex] (v6) at (5,0) {};
	\node[vertex] (v7) at (6,0) {};
	\node[vertex] (u1) at (0,1) {};
	\node[vertex] (u2) at (1,1) {};
	\node[vertex] (u3) at (2,1) {};
	\node[vertex] (w1) at (2,-1) {};
	\node[vertex] (w2) at (3,-1) {};
	\node[vertex] (w3) at (4,-1) {};
	\node[vertex] (u4) at (4,1) {};
	\node[vertex] (u5) at (5,1) {};
	\node[vertex] (u6) at (6,1) {};
	\draw[edge] (v1) -- (v2) -- (v3);
	\draw[edge] (v3) -- (v4) -- (v5);
	\draw[edge] (v5) -- (v6) -- (v7);
	\draw[edge] (v1) -- (u1) -- (u2) -- (u3) -- (v3);
	\draw[edge] (v3) -- (w1) -- (w2) -- (w3) -- (v5);
	\draw[edge] (v5) -- (u4) -- (u5) -- (u6) -- (v7);
	\end{tikzpicture}
\end{subfigure}

\vspace{\baselineskip}
	\begin{subfigure}{\linewidth}
		\centering
	\begin{tikzpicture}[scale=.8]
	\node[vertex] (v1) at (0,0) {};
	\node[vertex] (v2) at (1,0) {};
	\node[vertex] (v3) at (2,0) {};
	\node[vertex] (v4) at (3,0) {};
	\node[vertex] (v5) at (4,0) {};
	\node[vertex] (v6) at (5,0) {};
	\node[vertex] (v7) at (6,0) {};
	\node[vertex] (v8) at (7,0) {};
	\node[vertex] (v9) at (8,0) {};
	\node[vertex] (v10) at (9,0) {};
	\node[vertex] (v11) at (10,0) {};
	\node[vertex] (v12) at (11,0) {};
	\node[vertex] (v13) at (12,0) {};
	\node[vertex] (v14) at (13,0) {};
	\node[vertex] (v15) at (14,0) {};
	\node[vertex] (v16) at (15,0) {};
	\node[vertex] (u1) at (0,1) {};
	\node[vertex] (u2) at (1,1) {};
	\node[vertex] (u3) at (2,1) {};
	\node[vertex] (u4) at (3,1) {};
	\node[vertex] (w1) at (3,-1) {};
	\node[vertex] (w2) at (4,-1) {};
	\node[vertex] (w3) at (5,-1) {};
	\node[vertex] (w4) at (6,-1) {};
	\node[vertex] (u5) at (6,1) {};
	\node[vertex] (u6) at (7,1) {};
	\node[vertex] (u7) at (8,1) {};
	\node[vertex] (u8) at (9,1) {};
	\node[vertex] (w5) at (9,-1) {};
	\node[vertex] (w6) at (10,-1) {};
	\node[vertex] (w7) at (11,-1) {};
	\node[vertex] (w8) at (12,-1) {};
	\node[vertex] (u9) at (12,1) {};
	\node[vertex] (u10) at (13,1) {};
	\node[vertex] (u11) at (14,1) {};
	\node[vertex] (u12) at (15,1) {};
	\draw[edge] (v1) -- (v2) -- (v3) -- (v4);
	\draw[edge] (v4) -- (v5) -- (v6) -- (v7);
	\draw[edge] (v7) -- (v8) -- (v9) -- (v10);
	\draw[edge] (v10) -- (v11) -- (v12) -- (v13) -- (v14) -- (v15) -- (v16);
	\draw[edge] (v1) -- (u1) -- (u2) -- (u3) -- (u4) -- (v4);
	\draw[edge] (v4) -- (w1) -- (w2) -- (w3) -- (w4) -- (v7);
	\draw[edge] (v7) -- (u5) -- (u6) -- (u7) -- (u8) -- (v10);
	\draw[edge] (v10) -- (w5) -- (w6) -- (w7) -- (w8) -- (v13);
	\draw[edge] (v13) -- (u9) -- (u10) -- (u11) -- (u12) -- (v16);
	\end{tikzpicture}
\end{subfigure}
	\caption{Graphs $G_k$ consisting of $2k-3$ cycles of length $2k$, depicted for $k=3$ and $k = 4$. These graphs have divisorial gonality $k$ and stable gonality $2$. }\label{fig:dgon-onbegrend}
\end{figure}
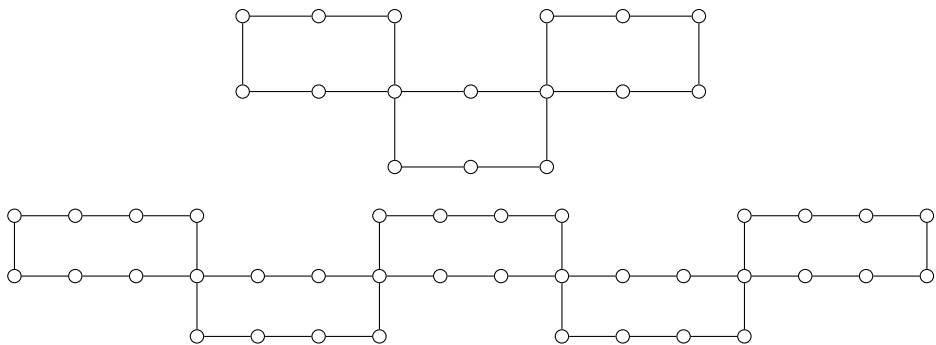
In the proof of Corollary 17 in the arXiv-version of \cite{Hendrey16} the following family of graphs is considered: let $G_k$ be the graph consisting of a sequence of $2k-3$ cycles of length $2k$, concatenated such that every cycle has an arc of length $k-1$ and an arc of length $k+1$. See Figure \ref{fig:dgon-onbegrend} for an illustration of $G_3$ and $G_4$. 
It is shown there that these graphs have divisorial gonality $k$. However, all $G_k$ have stable gonality $2$: subdivide every arc of length $k-1$ two times and map the obtained graph to a path of $(2k-3)(k+1) + 1$ vertices. This yields a finite harmonic morphism of degree 2, which shows that the divisorial gonality of a graph is not bounded by any function of the stable gonality.

\subsection{Stable gonality is unbounded in divisorial gonality}
Ye Luo \cite[Example 5.13]{Amini2} gave an example of a graph with divisorial gonality $3$ and stable gonality $4$, see Figure \ref{fig:graafLuo}, showing that graphs exist where the stable gonality is strictly larger than the divisorial gonality. We extend this result by constructing a family of graphs all of which have divisorial gonality at most $3$, but the stable gonality is unbounded.
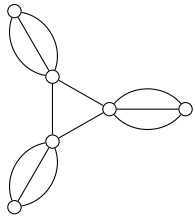
\begin{figure}
	\centering
	\begin{tikzpicture}
		\graafr{0}{0}
	\end{tikzpicture}
	\caption{A graph with stable gonality $4$ and divisorial gonality $3$.} \label{fig:graafLuo}
\end{figure}

\begin{construction} 
	Let $G_0$ be the triangle graph. For $k\geq 2$, we construct $G_k$ as follows. Take a triangle graph and three copies of $G_{k-1}$. For every copy of $G_{k-1}$, pick a vertex in this copy. Identify each of those vertices with one of the vertices of the triangle graph. We call this triangle graph the \emph{central triangle $C$} in $G_k$. 
\end{construction}
\begin{figure}
	\begin{tikzpicture}[scale = 0.62]
		\makeg{0}{0}{90}{1}
		\makegtwo{4.3}{0}{0}{1}
		\makegthree{13}{0}{0}
	\end{tikzpicture}
	\caption{Examples of graphs $G_1, G_2$, and $G_3$. }\label{fig:groteGraaf}
\end{figure}
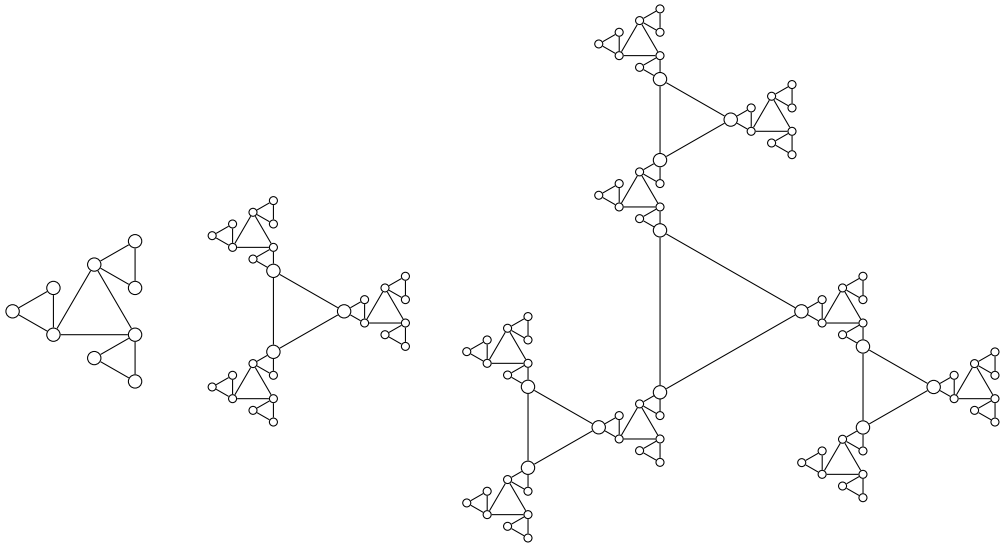

\begin{remark}
	Notice that the graphs $G_k$ constructed above are not unique. See Figure \ref{fig:groteGraaf} for an example. In the rest of this section, we fix some sequence $G_k$ that can be constructed in this way. 
\end{remark}

We will prove that the divisorial gonality of $G_k$ is 3 for all $k \geq 1$, and that the stable gonality of $G_k$ is at least $k+2$. For this, we first show some lemmas. The first lemma states that, given a finite harmonic morphism $\phi\colon G' \to T$, a path in $T$ can be lifted to a path in $G'$. This is due to harmonicity of the morphism $\phi$, and this will be the only consequence of harmonicity that we use in this section.

\begin{lemma} \label{lem:path}
	Let $G$ be a graph and $\phi\colon G' \to T$ a finite harmonic morphism from a refinement of $G$ to a tree $T$. Let $P$ be a $t_1-t_2$-path in $T$. Let $v\in V(G)$ be a vertex such that $\phi(v) = t_1$. Then there exists a vertex $u \in V(G)$ and a $v-u$-path $Q$ such that $\phi(Q)$ is a path and equals $P$.
\end{lemma}
\begin{proof}
	We show this by induction on the length of the path $\phi(v) - t$. If this length equals zero, we are done. Suppose that for every vertex with distance $k$ to $\phi(v)$, we can lift this path, i.e.\ there is a $v-u$-path $Q$ that is mapped to $P$. Let $t$ be a vertex with distance $k+1$ to $\phi(v)$. Let $t'$ be the neighbour of $t$ that is closer to $\phi(v)$. By the induction hypothesis, there is a path $v, u_1, \ldots, u_k$ such that $\phi(v), \phi(u_1), \ldots, \phi(u_k)$ equals the unique path from $\phi(v)$ to $t'$. Since $\phi$ is harmonic at $u_k$, there is a neighbour $u_{k+1}$ of $u_k$, such that $\phi(u_{k+1}) = t$. 
\end{proof}

Let $\phi\colon G_k' \to T$ be a finite harmonic morphism from a refinement of $G_k$ to a tree $T$. Let $C$ be the central triangle in $G_k$. Call the vertices of this central triangle $a_i$ for $i\in \{1,2,3\}$
Write $C'$ for the subgraph of $G_k'$ that consists of $C$ and all refinements added to the edges of $C$, see Figure \ref{fig:central-copy}. Note that the vertices added to the vertices $a_i$ are not part of $C'$. 
\begin{lemma}\label{lem:niet-uniek}
	Consider the restriction of $\phi$ to $C'$. There is at least one vertex $a_i$ such that there is another vertex in $C'$ that is mapped to $\phi(a_i)$. 
\end{lemma}
\begin{figure} 
	\centering
	\begin{tikzpicture}
		\makegtwolarge{0}{20}{0}{1}
	\end{tikzpicture}
~\qquad	\qquad	\qquad
	\begin{tikzpicture}[anchor=base, baseline=-91pt]
		\central{100}{0}{0}{1}
	\end{tikzpicture}
 \caption{Left: A refinement of the graph $G_1$. Right: The refinement $C'$ of the central triangle.}
  \label{fig:central-copy}
\end{figure}
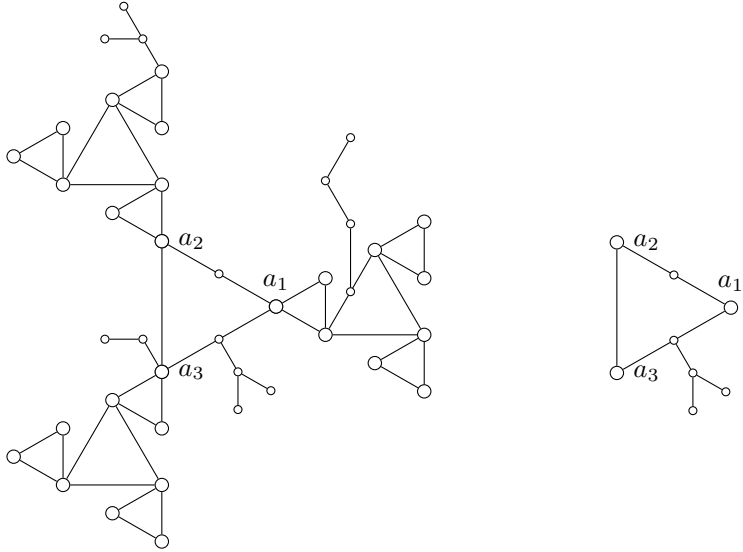
\begin{proof}
	Assume that $a_1$, $a_2$ and $a_3$ are all mapped to unique vertices by $\phi$ restricted to $C'$. Denote them by $a_1'$, $a_2'$ and $a_3'$ respectively. 
	Consider the $a_i'-a_j'$-paths in $T$. Without loss of generality, the distance $d(a_2', a_3')$ is maximal. Notice that this distance is at least 2, so there is an internal vertex $b'$ on the $a_2'-a_3'$-path. Now consider the $b'-a_1'$-path $P$. Notice that $P$ does not contain both $a_2'$ and $a_3'$, because the distance $d(a_2', a_3')$ is maximal. 
	
	Let $b$ be a subdivision of the edge $a_2a_3$ of $G$ that is mapped to $b'$. By Lemma \ref{lem:path}, there is a $b-u$-path $Q$ in $G_k$ that is mapped to $P$. Notice that $Q$ does not contain $a_2$ and $a_3$, because $a_2'$ and $a_3'$ are not contained in $P$. It follows that $Q$ lies in $C'$. Since every path from $b$ to $a_1$ does contain $a_2$ or $a_3$, it follows that $u\neq a_1$. Hence, there is another vertex in $C'$ that is mapped to $a_1'$. 
\end{proof}

\begin{lemma} \label{lem:bound-sgon}
	The stable gonality of $G_k$ is at least $k+2$ for every $k\geq 0$. 
\end{lemma}
\begin{proof}
	We prove this by induction. It is known that $G_0$ has stable gonality $2$, so the statement holds for $k=0$. 
	
	Suppose that $\sgon(G_{k-1}) \geq k+1$. Consider $G_{k}$. 
	Let $\phi\colon G_k' \to T$ be a finite harmonic morphism from a refinement of $G_k$ to a tree $T$. By Lemma \ref{lem:niet-uniek}, we know that there is a vertex $a_i$, such that there is another vertex $w$ in $C'$ that is mapped to $\phi(a_i)$. 
	
	Consider the copy of $G_{k-1}$ in $G_k$ that shares vertex $a_i$, call this copy $H$. Write $H'$ for the refinement of $H$ in $G_k'$. Notice that $\phi|_H\colon H\to \phi(H)$ is a finite morphism, and is harmonic for all vertices except $a_i$. We will refine $H$ to extend the restriction of $\phi$ to $H$ to a finite harmonic morphism $\psi$. Consider the tree $T' = \phi(H)$. Let $e \in E_{\phi(a_i)}$ be the edge such that $m_{\phi,e}(a_i)$ is maximal. For every edge $e' = \phi(a_i)u$ with $m_{\phi,e'}(a_i) < m_{\phi,e}(a_i)$, we do the following. Let $T'_{\phi(a_i)}(u)$ be the subtree of $T'$ that consists of $\phi(a_i)$ together with the component that contains $u$ after removing $\phi(a_i)$ from $T'$. We add a copy of $T'_{\phi(a_i)}(u)$ to $a_i$, and assign index $m_{\phi,e}(a_i) - m_{\phi,e'}(a_i)$ to these new edges. Set $\psi$ as the identity map for those new vertices. For all vertices $v$ of $H$, set $\psi(v) = \phi(v)$. This map $\psi$ is a finite harmonic morphism. By the induction hypothesis it follows that $\deg(\psi) \geq k+1$. 
	
	We compute the degree of $\phi$ by counting the pre-images of $\phi(a_i)$: \begin{align*}
	\deg(\phi) &= \sum_{v\in G_k', \phi(v) = \phi(a_i)} r_\phi(v) \\
	&\geq r_\phi(w) +  \sum_{v\in H', \phi(v) = \phi(a_i)} r_\phi(v) \\
	&\geq 1 +  \sum_{v\in H', \psi(v) = \psi(a_i)} r_\psi(v) \\
	&\geq 1+ \deg(\psi) \\
	&\geq k+2. 
	\end{align*}
	We conclude that $\sgon(G_k) \geq k+2$. 
\end{proof}

\begin{theorem}
	For every $k\geq 1$ there is a graph with divisorial gonality $3$ and stable gonality at least $k+2$. 
\end{theorem}
\begin{proof}
	Notice that $G_1$ has divisorial gonality 3. Moreover, the divisor with $3$ chips on $a_1$ has rank at least $1$ and for every vertex $v$, this divisor is equivalent to the divisor with $3$ chips on $v$. It follows that the divisor on $G_k$ with $3$ chips on a vertex $v$ has degree $3$ and rank at least $1$. We conclude that for every $k$ the graph $G_k$ has divisorial gonality at most $3$. 
	
	By Lemma \ref{lem:bound-sgon}, we see that the graph $G_{k}$ has stable gonality at least $k+2$. 
\end{proof}

\begin{remark}
    This construction can be carried out in a more general way than what has been done in this section. Let $G_1$ be any graph with divisorial gonality $d$ divisible by $3$ and stable gonality $s$. Suppose that a divisor on $G_1$ of degree $d$ and rank at least $1$ is equivalent to a divisor with $d$ chips on a single vertex $v$. Take a triangle graph and three copies of $G_1$. In each of the graphs $G_1$ identify the vertex $v$ with one of the vertices of the triangle graph (so that no vertex of the triangle is chosen twice). We call this graph $G_2$, and it has divisorial gonality $d$ and stable gonality at least $s+1$. Repeating this recursive process generates a family of graphs with bounded divisorial gonality and unbounded stable gonality.
\end{remark}

\section{Conclusion}
In this paper we have seen several notions of gonality for graphs. We have proven that all are NP-hard to compute. This leaves open some questions regarding the exact complexity of the notions of graph gonality. For example, it is known that the \textsc{Divisorial Gonality} problem is in XP, but it is remains open whether it is in FPT or W[1]-hard. For the \textsc{Stable Gonality} problem it is even unknown whether it is in XP. It is unknown whether any of the notions of gonality for graphs can be used as parameter for FPT-algorithms: it is interesting to see NP-hard problems that are not tractable on graphs of bounded treewidth, but are tractable on graphs of bounded gonality. 

Moreover, we have seen that all notions of graph gonality are APX-hard. 
As mentioned before, it is unknown whether any of these is in APX.

\end{document}